\newtheoremstyle{standard}
 {16pt}  
 {16pt}  
 {}  
 {}  
 {\bfseries}
 {}  
 { } 
 {{\thmname{#1~}}{\thmnumber{#2.}}\thmnote{~(#3)}} 
\newtheoremstyle{kursiv}
 {16pt}  
 {16pt}  
 {\itshape}  
 {}  
 {\bfseries}
 {}  
 { } 
 {{\thmname{#1~}}{\thmnumber{#2.}}\thmnote{~(#3)}} 
\theoremstyle{standard}
\newtheorem{defn} [subsection]{Definition}
\newtheorem{ex} [subsection]{Example}
\newtheorem{rem}   [subsection]{Remark}
\newtheorem{nota}   [subsection]{Notation}
\newtheorem{setup} [subsection]{}
\theoremstyle{definition}
\theoremstyle{kursiv}
\newtheorem{thm}[subsection]{Theorem}
\newtheorem{prop} [subsection]{Proposition}
\newtheorem{cor} [subsection]{Corollary}
\newtheorem{lem} [subsection]{Lemma}
\newcommand{\Evol}{\mathrm{Evol}}
\newcommand{\im}{\mathrm{im}}
\newcommand{\evol}{\mathrm{evol}}
\newcommand{\ev}{\mathrm{ev}}
\newcommand{\di}{\mathrm{d}}
\newcommand{\id}{\mathrm{id}}
\newcommand{\N}{\mathbb{N}}
\newcommand{\R}{\mathbb{R}}
\newcommand{\Q}{\mathbb{Q}}
\newcommand{\K}{\mathbb{K}}
\newcommand{\C}{\mathbb{C}}
\newcommand{\LB}[1][\cdot \hspace{1pt} , \cdot]{[\hspace{1pt} #1 \hspace{1pt} ]}
\DeclareMathOperator{\Fl}{Fl}
\newcommand{\set}[1]{\{  #1 \}}
\newcommand{\setm}[2]{\left\{\, #1 \,\middle\vert\, #2\,\right\}}
\newcommand{\norm}[1]{\left\lVert #1 \right\rVert}
\newcommand{\ve}{\varepsilon}
\newcommand{\coloneq}{\colonequals}
\DeclareMathOperator{\PR}{Pr}
\DeclareMathOperator{\OST}{OST}
\DeclareMathOperator{\splitting}{SP}
\newcommand{\SP}[1]{\splitting ( #1 )}
\DeclareMathOperator{\one}{\mathbf{1}}
\DeclareMathOperator{\an}{\mathbf{a}} 
\DeclareMathOperator{\bn}{\mathbf{b}} 	
\DeclareMathOperator{\cn}{\mathbf{c}} 
\DeclareMathOperator{\zero}{\mathbf{0}}
\DeclareMathOperator{\LOG}{Log}
\DeclareMathOperator{\BN}{Box} 
\newcommand{\BGp}{\ensuremath{G_{\mathrm{TM}}}}
\newcommand{\BGC}{\ensuremath{G_{\mathrm{TM}}^{\C}}}
\newcommand{\SymplTM}{\SymplTMR^\C}
\newcommand{\SymplTMR}{\ensuremath{S_{\mathrm{TM}}}}
\newcommand{\RT}{\ensuremath{\tT}}
\newcommand{\hH}{\ensuremath{\mathcal{H}}}
\newcommand{\tT}{\ensuremath{\mathcal{T}}}
\newcommand{\pP}{\ensuremath{\mathcal{P}}}
\newcommand{\Lf}{\ensuremath{\mathbf{L}}}
\newcommand{\dd}{\mathrm{d}}
\tikzstyle dtree=[grow'=up,sibling distance=2mm,level distance=2mm,thick]
\tikzstyle dtree node=[scale=0.3,shape=circle,very thin,draw]
\tikzstyle dtree black node=[style=dtree node,fill=black]
\newcommand{\onenode}{
  \begin{tikzpicture}[dtree]
    \node[dtree black node] {}
    ;
  \end{tikzpicture}
}
\newcommand{\twonode}{
\begin{tikzpicture}[dtree]
  \node[dtree black node] {}
  child { node[dtree black node] {} }
  ;
\end{tikzpicture}
}
\title{The Lie group structure of the Butcher group}
 \author{Geir Bogfjellmo\footnote{NTNU Trondheim \href{mailto:geir.bogfjellmo@math.ntnu.no}{geir.bogfjellmo@math.ntnu.no}, Phone: +47 73591753, Fax: +47 73593524\qquad(Corresponding author)}%
 \ \ and Alexander
Schmeding\footnote{NTNU Trondheim
\href{mailto:alexander.schmeding@math.ntnu.no}{alexander.schmeding@math.ntnu.no}
}}
\begin{document}

\maketitle

\begin{abstract}
 The Butcher group is a powerful tool to analyse integration methods for ordinary differential equations, in particular Runge--Kutta methods.
 In the present paper, we complement the algebraic treatment of the Butcher group with a natural infinite-dimensional Lie group structure. 
 This structure turns the Butcher group into a real analytic Baker--Campbell--Hausdorff Lie group modelled on a Fr\'{e}chet space.
 In addition, the Butcher group is a regular Lie group in the sense of Milnor and contains the subgroup of symplectic tree maps as a closed Lie subgroup.
 Finally, we also compute the Lie algebra of the Butcher group and discuss its relation to the Lie algebra associated to the Butcher group by Connes and Kreimer.  
\end{abstract}

\medskip

\textbf{Keywords:} Butcher group, infinite-dimensional Lie group,
Hopf algebra of rooted trees, regularity of Lie groups, symplectic methods

\medskip

\textbf{MSC2010:} 22E65 (primary); 	65L06, 58A07, 16T05 (secondary)

\tableofcontents

\section*{Introduction and statement of results}
 In his seminal work \cite{Butcher72} J.C.\ Butcher introduced the Butcher group as a tool to study order conditions for a class of integration methods.
 Butcher's idea was to build a group structure for mappings on rooted trees. 
 The interplay between the combinatorial structure of rooted trees and this group structure enables one to handle formal power series solutions of non-linear ordinary differential equations.
 As a consequence, an efficient treatment of the algebraic order conditions arising in the study Runge--Kutta methods became feasible.
 Building on Butcher's ideas, numerical analysts have extensively studied algebraic properties of the Butcher group and several of its subgroups. 
 The reader is referred to \cite{CHV2010,HLW2006,Brouder-04-BIT} and most recently the classification of integration methods in \cite{MMMV14}.
 \bigskip
 
 The theory of the Butcher group developed in the literature is mainly algebraic in nature. 
 An infinite-dimensional Lie algebra is associated to it via a link to a certain Hopf algebra (cf.\ \cite{Brouder-04-BIT} and Remark \ref{remark: CKLie}), but is not derived from a differentiable structure.
 However, it is striking that the theory of the Butcher group often builds on an intuition which involves a differentiable structure.
 For example, in \cite[IX Remark 9.4]{HLW2006} a derivative of a mapping from the Butcher group into the real numbers is computed, and the authors describe a ``tangent space'' of a subgroup.
 In \cite{CMSS93}, the authors calculate the exponential map by solving ordinary differential equations. The calculation in loc.cit.\ can be interpreted as a solution of ordinary differential equations evolving on the Butcher group.
 Differentiating functions and solving differential equations both require an implicit assumption of a differentiable structure.
 The present paper aims to explicitly describe a differentiable structure on the Butcher group that corresponds to the structure implicitly used in \cite{HLW2006,CMSS93}.
 To the authors' knowledge this is the first attempt to rigorously construct and study the infinite-dimensional manifold structures on the Butcher group and connect it with the associated Lie algebra.\bigskip
 
 Guiding our approach is the idea that a natural differentiable structure on the Butcher group should reproduce on the algebraic side the well known formulae for derivatives and objects considered in numerical analysis. 
 To construct such a differentiable structure on the Butcher group we base our investigation on a concept of $C^r$-maps between 
 locally convex spaces.
 This calculus is known in the literature as (Michal--)Bastiani~\cite{bastiani64} or Keller's $C^r_c$-theory~\cite{keller1974} (see \cite{milnor1983,hg2002a,neeb2006} for streamlined expositions). 
 In the framework of this theory, we construct a differentiable structure which turns the Butcher group into a (locally convex) Lie group modelled on a Fr\'{e}chet space.
 Then the Lie theoretic properties of the Butcher group and the subgroup of symplectic tree maps are investigated.
 In particular, we compute the Lie algebras of these Lie groups and relate them to the Lie algebra associated to the Butcher group.  
 \bigskip
 
 We now go into some more detail and explain the main results of the present paper.
 Let us first recall some notation and the definition of the Butcher group. 
 Denote by $\RT$ the set of all rooted trees with a finite positive number of vertices (cf.\ Section \ref{sect: BG-LGP}). 
  Furthermore, we let $\emptyset$ be the empty tree. 
  Then the Butcher group is defined to be the set of all tree maps which map the empty tree to $1$, i.e.\
  \begin{displaymath}
   \BGp = \{a \colon \RT \cup \{ \emptyset \} \rightarrow \R \mid a (\emptyset) = 1\}.
  \end{displaymath}
  To define the group operation, one interprets the values of a tree map as coefficients of a (formal) power series. 
  Via this identification, the composition law for power series induces a group operation on $\BGp$.\footnote{The same construction can be performed for tree maps with values in the field of complex numbers. 
  The group $\BGC$ of complex valued tree maps obtained in this way will be an important tool in our investigation. In fact, $\BGC$ is a complex Lie group and the complexification (as a Lie group) of the Butcher group.}
  We refrain at this point from giving an explicit formula for the group operations and refer instead to Section \ref{sect: BG-LGP}.\bigskip
  
  Note that the Butcher group contains arbitrary tree maps, i.e.\ there is no restriction on the value a tree map can attain at a given tree.
  Thus the natural choice of model space for the Butcher group is the space $\R^{\RT}$ of all real-valued tree maps, with the topology of pointwise convergence.
  Observe that $\RT$ is a countable (infinite) set, whence $\R^\RT$ is a Fr\'{e}chet space, i.e.\ a locally convex space whose topology is generated by a complete translation invariant metric.
  Now the results in Section \ref{sect: BG-LGP} subsume the following theorem.
  \bigskip
  
  \textbf{Theorem A} \emph{The Butcher group $\BGp$ is a real analytic infinite dimensional Lie group modelled on the Fr\'{e}chet space $\R^{\RT}$. }
 \bigskip
 
  Note that the topology on the model space of $\BGp$ can be defined in several equivalent ways. 
  For example as an inverse (projective) limit topology.
  The Fr\'{e}chet space $\R^{\RT}$ can be described as the inverse limit $\varprojlim_{n\in \N} \R^{\RT_n}$ where $\RT_n$ is the (finite) set of rooted trees with at most $n$ roots.
  The projection $\PR_n:\R^{\RT} \to \R^{\RT_n}$ is obtained by restricting the tree mapping to trees with at most $n$ nodes. In numerical analysis, this corresponds to truncating a B-series by ignoring $\mathcal{O}(h^{n+1})$ terms.
  The topology on $\R^{\RT}$ is the coarsest topology such that $\PR_n$ is continuous for all $n$.
  This means that a sequence in $\R^{\RT}$ converges if and only if it converges in all projections.


  Furthermore, this topology is rather coarse, i.e.\ some subsets of $\BGp$ considered in applications in numerical analysis will \emph{not} be open with respect to this topology (see Remark \ref{rem: coarse:top}). 
  Nevertheless, we shall argue that the Lie group structure we constructed is the natural choice, i.e.\ it complements the intuition of numerical analysts and the known algebraic picture.
  
  To illustrate our point let us now turn to the Lie algebra associated to the Butcher group.
  For a tree $\tau$ we denote by $\SP{\tau}_1$ the set of non-trivial splittings of $\tau$, i.e.\ non-empty subsets $S$ of the nodes of $\tau$ such that the subgraphs $S_\tau$ (with set of nodes $S$) and $\tau \setminus \sigma$ are non-empty subtrees.
  With this notation we can describe the Lie bracket obtained in Section \ref{sect: BG-LAlg} as follows. 
  \bigskip
  
  \textbf{Theorem B} \emph{The Lie algebra $\Lf (\BGp)$ of the Butcher group is $(\R^\RT, \LB)$.
  Then the Lie bracket $\LB[\an , \bn]$ for $\an ,\bn \in \R^\RT$ is given for $\tau \in\RT$ by
   \begin{displaymath}
    \LB[\an,\bn](\tau) = \sum_{s \in \SP{\tau}_1}\left(\bn(s_{\tau}) \an (\tau \setminus s) - \bn(\tau \setminus s) \an (s_{\tau})\right).
   \end{displaymath}
  }
  
  At this point, we have to digress to put our results into a broader perspective.
  Working in renormalisation of quantum field theories, Connes and Kreimer have constructed in \cite{CK98} a Lie algebra associated to the Hopf algebra of rooted trees. 
  Later in \cite{Brouder-04-BIT} it was observed that the Butcher group can be interpreted as the character group of this Hopf algebra.
  In particular, one can view the Connes--Kreimer Lie algebra as a Lie algebra associated to the Butcher group. 
  As a vector space the Connes--Kreimer Lie algebra is the direct sum $\bigoplus_{n \in \N} \R = \{(a_n) \in \R^\N \mid a_n \neq 0 \text{ for only finitely many } n\}$ endowed with a certain Lie bracket.
  Now the Connes--Kreimer Lie algebra can canonically be identified with a subspace of $\Lf (\BGp)$ such that the Lie bracket $\LB$ from Theorem B restricts to the Lie bracket of the Connes--Kreimer Lie algebra.
  Thus we recover the Connes--Kreimer Lie algebra from our construction as a dense (topological) Lie subalgebra. 
  Our Lie algebra is thus the ``completion'' of the Connes--Kreimer Lie algebra discussed in purely algebraic terms in the numerical analysis literature (see Remark \ref{remark: CKLie}).
  The authors view this as evidence that the Lie group structure for the Butcher group constructed in this paper is the natural choice for such a structure.
  \bigskip
  
  We then investigate the Lie theoretic properties of the Butcher group. 
  To understand these results first recall the notion of regularity for Lie groups.
  
   Let $G$ be a Lie group modelled on a locally convex space, with identity element $\one$, and
  $r\in \N_0\cup\{\infty\}$. We use the tangent map of the right translation
  $\rho_g\colon G\to G$, $x\mapsto xg$ by $g\in G$ to define
  $v.g\coloneq T_{\one} \rho_g(v) \in T_g G$ for $v\in T_{\one} (G) =: \Lf(G)$.
  Following \cite{dahmen2011} and \cite{1208.0715v3}, $G$ is called
  \emph{$C^r$-regular} if for each $C^r$-curve
  $\gamma\colon [0,1]\rightarrow \Lf(G)$ the initial value problem
  \begin{displaymath}
   \begin{cases}
    \eta'(t)&= \gamma(t).\eta(t)\\ \eta(0) &= \one
   \end{cases}
  \end{displaymath}
  has a (necessarily unique) $C^{r+1}$-solution
  $\Evol (\gamma)\coloneq\eta\colon [0,1]\rightarrow G$, and the map
  \begin{displaymath}
   \evol \colon C^r([0,1],\Lf(G))\rightarrow G,\quad \gamma\mapsto \Evol
   (\gamma)(1)
  \end{displaymath}
  is smooth. If $G$ is $C^r$-regular and $r\leq s$, then $G$ is also
  $C^s$-regular. A $C^\infty$-regular Lie group $G$ is called \emph{regular}
  \emph{(in the sense of Milnor}) -- a property first defined in \cite{milnor1983}.
  Every finite dimensional Lie group is $C^0$-regular (cf. \cite{neeb2006}). Several
  important results in infinite-dimensional Lie theory are only available for
  regular Lie groups (see
  \cite{milnor1983,neeb2006,1208.0715v3}, cf.\ also \cite{KM97} and the references therein).
  Specifically, a regular Lie group possesses a smooth Lie group exponential map.
  \bigskip
  
  \textbf{Theorem C} \emph{ The Butcher group is
  \begin{enumerate}
   \item $C^0$-regular and thus in particular regular in the sense of Milnor,
   \item exponential and even a Baker--Campbell--Hausdorff Lie group, i.e.\ the Lie group exponential map $\exp_{\BGp} \colon \Lf (\BGp) \rightarrow \BGp$ is a real analytic diffeomorphism.
  \end{enumerate}
  }
  Finally, we consider in Section \ref{sect: subgp} the subgroup $\SymplTMR$ of symplectic tree maps studied in numerical analysis.
  The elements of $\SymplTMR$ correspond to integration methods which are symplectic for general Hamiltonian systems $y' = J^{-1} \nabla H(y)$ (cf.\ \cite[VI.]{HLW2006}).  
  
  Our aim is to prove that $\SymplTMR$ is a Lie subgroup of $\BGp$. 
  Using the algebraic characterisation of elements in $\SymplTMR$ it is easy to see that $\SymplTMR$ is a closed subgroup of $\BGp$.
  Recall from \cite[Remark IV.3.17]{neeb2006} that contrary to the situation for finite dimensional Lie groups, closed subgroups of infinite dimensional Lie groups need not be Lie subgroups.
  Nevertheless, our results subsume the following theorem.\bigskip
  
  \textbf{Theorem D} \emph{The subgroup $\SymplTMR$ of symplectic tree maps is a closed Lie subgroup of the Butcher group.
  Moreover, this structure turns the subgroup of symplectic tree maps into an exponential Baker--Campbell--Hausdorff Lie group.}\bigskip
  
   The characterisation of the Lie algebra of $\SymplTMR$ exactly reproduces the condition in \cite[IX. Remark 9.4]{HLW2006}, which characterises ``the tangent space at the identity of $\SymplTMR$''. 
   Note that in loc.cit.\ no differentiable structure on $\BGp$ or $\SymplTMR$ is considered and a priori it is not clear whether $\SymplTMR$ is actually a submanifold of $\BGp$.
   The differentiable structure of the Butcher group allows us to exactly recover the intuition of numerical analysts. 
   
  We have already mentioned that the Butcher group is connected to a certain Hopf algebra. 
  Using this connection, one can derive the constructions done here from the broader framework of Lie group structures for character groups of Hopf algebras developed in \cite{BDS2015}.
  In the present paper we avoid using the language of Hopf algebras and instead focus on concrete calculations. 
  Hence the reader need not be familiar with the overarching framework to understand the present paper.
  
\section{Preliminaries on the Butcher group and calculus}

In this section we recall some preliminary facts on the Butcher group and the differential calculus (on infinite-dimensional spaces) used throughout the paper. 
These results are well known in the literature but we state them for the readers convenience.
Finally, we will also discuss different natural topologies on the Butcher group and single out the topology which turns the Butcher group into a Lie group.
Let us first fix some notation used throughout the paper.

\begin{nota}
 We write $\N \coloneq \set{1,2,\ldots}$, respectively $\N_0 \coloneq \N \cup \set{0}$. As usual $\R$ and $\C$ denote the fields of real and complex numbers, respectively. 
 \end{nota}

\subsection*{The Butcher group}

We recommend \cite{HLW2006,CHV2010} for an overview of basic results and algebraic properties of the Butcher group. 

\begin{nota}
 \begin{enumerate}
  \item A \emph{rooted tree} is a connected \emph{finite} graph without cycles with a distinguished node called the \emph{root}. 
  We identify rooted trees if they are graph isomorphic via a root preserving isomorphism.
  
  Let $\RT$ be the set of all rooted trees with a finite number of vertices and denote by $\emptyset$ the empty tree. 
  We set $\RT_0 \coloneq \RT \cup \{\emptyset\}$.
  The \emph{order} $|\tau|$ of a tree $\tau \in \RT_0$ is its number of vertices. 
  \item An \emph{ordered subtree}\footnote{The term ``ordered'' refers to that the subtree remembers from which part of the tree it was cut.} of $\tau \in \RT_0$ is a subset $s$ of vertices of $\tau$ which satisfies 
    \begin{compactitem}
     \item[(i)] $s$ is connected by edges of the tree $\tau$,
     \item[(ii)] if $s$ is non-empty, it contains the root of $\tau$.
    \end{compactitem}
   The set of all ordered subtrees of $\tau$ is denoted by $\OST (\tau)$.
   Associated to an ordered subtree $s \in \OST (\tau)$ are the following objects:
    \begin{compactitem}
     \item A forest (collection of rooted trees) denoted as $\tau \setminus s$. 
     The forest $\tau \setminus s$ is obtained by removing the subtree $s$ together with its adjacent edges from $\tau$
     \item $s_\tau$, the rooted tree given by vertices of $s$ with root and edges induced by that of the tree $\tau$.
    \end{compactitem}
 \end{enumerate}
\end{nota}

\begin{setup}[Butcher group]
 Define the \emph{complex Butcher group} as the set of all tree maps 
  \begin{displaymath}
   \BGC = \setm{a \colon \RT_0 \rightarrow \C}{a(\emptyset) = 1}
  \end{displaymath}
 together with the group multiplication
 \begin{equation}\label{eq: BGP:mult}
  a\cdot b (\tau) \coloneq \sum_{s \in \OST (\tau)} b(s_\tau) a(\tau \setminus s) \quad \text{with} \quad a(\tau \setminus s) \coloneq \prod_{\theta \in \tau \setminus s} a (\theta).
 \end{equation}
 The identity element $e \in \BGp$ with respect to this group structure is  
  \begin{displaymath}
   e \colon \RT_0 \rightarrow \C, e (\emptyset) = 1 , \ e(\tau) = 0, \forall \tau \in \RT.
  \end{displaymath}
  We define the \emph{(real) Butcher group} as the real subgroup
  \begin{displaymath}
   \BGp = \setm{a \in \BGC}{\im \, a \subseteq \R}
  \end{displaymath}
  of $\BGC$. 
  Note that the real Butcher group is referred to in the literature as ``the Butcher group'', whence ``the Butcher group'' will always mean the real Butcher group.   
 \end{setup}
 
 
 \begin{rem}\label{rem: affsubs}
  For $\K \in \{ \R, \C \}$ the set of all tree maps $\K^{\RT_0} = \set{ a \colon \RT_0 \rightarrow \K }$ is a vector space with respect to the pointwise operations.
  Now the (complex) Butcher group coincides with the affine subspace $e + \K^{\RT}$, where $\K^{\RT}$ is identified with $\setm{a \in \K^{\RT_0}}{a (\emptyset) = 0}$.
 \end{rem}
 
 To state the formula for the inverse in the (complex) Butcher group we recall:
 
 \begin{nota}
 A \emph{partition} $p$ of a tree $\tau \in \RT_0$ is a subset of edges of the tree.
 We denote by $\pP (\tau)$ the set of all partitions of $\tau$ (including the empty partition). 
  Associated to a partition $p \in \pP (\tau)$ are the following objects 
    \begin{compactitem}
     \item A forest $\tau \setminus p$. 
     The forest $\tau \setminus p$ is defined as the forest that remains when the edges of $p$ are removed from the tree $\tau$,
     \item The \emph{skeleton} $p_\tau$, is the tree obtained by contracting each tree of $\tau \setminus p$ to a single vertex and by re-establishing the edges of $p$.
    \end{compactitem}
    
    \[\underbrace{\begin{tikzpicture}[dtree, scale=2]
      \node[dtree black node] {}
      child { node[dtree black node] {} edge from parent[dashed]
      }
      child { node[dtree black node] {}
      child { node[dtree black node] {} }
      child { node[dtree black node] {} edge from parent[dashed]
       child { node[dtree black node] {} edge from parent[solid]}
       child { node[dtree black node] {} edge from parent[solid]}
      }
      };
    \end{tikzpicture}}_\tau \qquad \qquad
    \underbrace{\begin{tikzpicture}[dtree, scale=2]
      \node[dtree black node] {}
      child { node[dtree black node] {} edge from parent[draw=none]
      }
      child { node[dtree black node] {}
      child { node[dtree black node] {} }
      child { node[dtree black node] {} edge from parent[draw=none]
       child { node[dtree black node] {} edge from parent[solid]}
       child { node[dtree black node] {} edge from parent[solid]}
      }
      };
      \end{tikzpicture}}_{\tau \setminus p} \qquad \qquad
    \underbrace{\begin{tikzpicture}[dtree, scale=2]
     \node[dtree black node] {}
      child { node[dtree black node] {} edge from parent[dashed]}
      child { node[dtree black node] {} edge from parent[dashed] };
    \end{tikzpicture}}_{p_\tau}
    \]
 Example of a partition $p$ of a tree  $\tau$, the forest $\tau \setminus p$ and the associated skeleton $p_\tau$. In the picture, the edges in $p$ are drawn dashed and roots are drawn at the bottom.
 \end{nota}
 
\begin{rem}[Inversion in the (complex) Butcher group]
  The inverse of an element in $\BGp$ can be computed as follows (cf.\ \cite{CHV2010})
  \begin{equation}\label{eq: BGP:invers}
   a^{-1} (\tau) = \sum_{p \in \pP (\tau)} (-1)^{|p_\tau|} a (\tau \setminus p) \quad \text{with }a(\tau \setminus p) = \prod_{\theta \in \tau \setminus p} a (\theta).
  \end{equation}
 \end{rem}
\subsection*{A primer to Locally convex differential calculus and manifolds}

 We will now recall basic facts on the differential calculus in infinite-dimensional spaces.
 The general setting for our calculus are locally convex spaces (see the extensive monographs \cite{MR0342978, jarchow1980}).  
 
 \begin{defn}
  Let $E$ be a vector space over $\K \in \{\R,\C \}$ together with a topology $T$.
  \begin{enumerate}
   \item $(E,T)$ is called \emph{topological vector space}, if the vector space operations are continuous with respect to $T$ and the metric topology on $\K$.
   \item A topological vector space $(E,T)$ is called \emph{locally convex space} if there is a family $\{p_i \colon E \rightarrow [0,\infty[ \mid i \in I\}$ of continuous seminorms for some index set $I$, such that 
     \begin{enumerate}
     \item the topology $T$ is the initial with respect to $\{p_i \colon E \rightarrow [0,\infty[\ \mid i \in I\}$, i.e.\ if $f \colon X \rightarrow E$ is a map from a topological space $X$, then $f$ is continuous if and only if $p_i \circ f$ is continuous for each $i\in I$, 
     \item if $x \in E$ with $p_i (x) = 0$ for all $i \in I$, then $x = 0$ (the semi-norms separate the points, i.e.\ $T$ has the Hausdorff property).
    \end{enumerate}
  In this case, the topology $T$ is \emph{generated by the family of seminorms} $\{p_i\}_{i \in I}$.
  Usually we suppress $T$ and write $(E, \{p_i\}_{i\in I})$ or simply $E$ instead of $(E,T)$.
  \item A locally convex space $E$ is called \emph{Fr\'{e}chet space}, if it is complete and its topology is generated by a countable family of seminorms.  
  \end{enumerate}
 \end{defn}
  
 \begin{rem}
 In a nutshell, a topological vector space carries a topology which is compatible with the vector space operations. 
 It turns out that the stronger conditions of a locally convex space yield an appropriate setting for infinite-dimensional calculus (i.e.\ many familiar results from calculus in finite dimensions carry over to these spaces).
 The spaces we are working with in the present paper will mostly be Fr\'{e}chet spaces. 
 Readers who are not familiar with these spaces should recall from \cite[Ch. I 6.1]{MR0342978} that the topology of a Fr\'{e}chet space is particularly nice, as it is induced by a metric. 
 
 Note that for a locally convex space $(E,\{p_i \mid i \in I\}$ the term \textquotedblleft locally convex\textquotedblright\ comes from the fact that the semi-norm balls 
  \begin{displaymath}
   B_r^{p_i} (x) = \{ y \in E \mid p_i (x-y) < r \} \text{ for } i\in I, r > 0 \text{ and } x \in E
  \end{displaymath}
 form convex neighbourhoods of the points. Since the topology is initial every open neighbourhood of a point contains such a convex neighbourhood. 
 \end{rem}

 \begin{ex}
  \begin{enumerate}
   \item Every normed space is a locally convex space (see \cite[Ch.\ I 6.2]{MR0342978}). 
   \item If $(E_\alpha , \{p_i^\alpha \mid i \in I_\alpha\})_{\alpha \in A}$ is a family of locally convex spaces, we denote by $E = \prod_{\alpha \in A} E_\alpha$ the cartesian product and let $\pi_\alpha \colon E \rightarrow E_\alpha$ be the projection onto the $\alpha$-component. Then $E$ is a locally convex space with the \emph{product topology} which is induced by the family of semi-norms $\{p_i^\alpha \circ \pi_\alpha \mid \alpha \in A, i \in I_\alpha \}$. 
   
   Note that with respect to the product topology each $\pi_\alpha$ is continuous and linear. 
   Furthermore, a mapping $f \colon F \rightarrow E$ from a locally convex space $F$ is continuous if and only if $\pi_\alpha \circ f$ is continuous. 
   If $A$ is countable and each $E_\alpha$ is a Fr\'{e}chet space, then $E$ is a Fr\'{e}chet space by \cite[Ch.\ I 6.2]{MR0342978} and \cite[Proposition 3.3.6]{jarchow1980}.
   \item Consider for $0< p < 1$ the $L^p$-spaces $L^p [0,1]$ of Lebesgue measurable functions on $[0,1]$.
   These spaces are complete topological vector spaces whose topology is induced by a metric, but they are not locally convex spaces (see \cite[Ch.\ I 6.1]{MR0342978}).
   \end{enumerate}\label{ex: lcvx:spac}
 \end{ex}
  
  As we are working beyond the realm of Banach spaces, the usual notion of Fr\'{e}chet differentiability can not be used\footnote{The basic problem is that the bounded linear operators do not admit a good topological structure if the spaces are not normable. In  particular, the chain rule will not hold for Fr\'{e}chet-differentiability in general for these spaces (cf.\ \cite[p. 73]{michor1980} or \cite{keller1974}).}  
  Moreover, there are several inequivalent notions of differentiability on locally convex spaces. 
  However, on Fr\'{e}chet spaces the most common choices coincide.
  For more information on our setting of differential calculus we refer the reader to \cite{hg2002a,keller1974}.
  The notion of differentiability we adopt is natural and quite simple, as the derivative is defined via directional derivatives.
  
\begin{defn}\label{defn: deriv} 
 Let $\K \in \set{\R,\C}$, $r \in \N \cup \set{\infty}$ and $E$, $F$ locally convex $\K$-vector spaces and $U \subseteq E$ open. 
 Moreover we let $f \colon U \rightarrow F$ be a map.
 If it exists, we define for $(x,h) \in U \times E$ the directional derivative 
 $$df(x,h) \coloneq D_h f(x) \coloneq \lim_{t\rightarrow 0} t^{-1} (f(x+th) -f(x)).$$ 
 We say that $f$ is $C^r_\K$ if the iterated directional derivatives
    \begin{displaymath}
     d^{(k)}f (x,y_1,\ldots , y_k) \coloneq (D_{y_k} D_{y_{k-1}} \cdots D_{y_1} f) (x)
    \end{displaymath}
 exist for all $k \in \N_0$ such that $k \leq r$, $x \in U$ and $y_1,\ldots , y_k \in E$ and define continuous maps $d^{(k)} f \colon U \times E^k \rightarrow F$. 
 If it is clear which $\K$ is meant, we simply write $C^r$ for $C^r_\K$.
 If $f$ is $C^\infty_\C$, we say that $f$ is \emph{holomorphic} and if $f$ is $C^\infty_\R$ we say that $f$ is \emph{smooth}.    
\end{defn}

\begin{ex}\label{ex: deriv: linear}
 Let $\lambda \colon E \rightarrow F$ be a continuous linear map between locally convex spaces, then for $x,y \in E$ we have
  \begin{displaymath}
   d\lambda (x,y) = \lim_{\K^\times \ni t \rightarrow 0} t^{-1} (\lambda (x+ty) - \lambda (x)) = \lambda (y).
  \end{displaymath}
 Hence we deduce that $d\lambda \colon E \times E \rightarrow F, (x,y) \rightarrow \lambda (y)$ is continuous and linear.
 In conclusion $\lambda$ is $C^1$ and its derivative is the map itself evaluated in the direction of derivation. Inductively, this implies that $\lambda$ is smooth. 
\end{ex}

On Fr\'{e}chet spaces our notion of differentiability coincides with the so called \textquotedblleft convenient setting \textquotedblright\ of global analysis outlined in \cite{KM97}.
Note that differentiable maps in our setting are continuous by default (which is in general not true in the convenient setting).
Later on a notion of analyticity for mappings between infinite-dimensional spaces is needed. 
Over the field of complex numbers we have the following assertion.

\begin{rem}
  \begin{enumerate}
   \item A map $f \colon U \rightarrow F$ is of class $C^\infty_\C$ if and only if it \emph{complex analytic} i.e.,
  if $f$ is continuous and locally given by a series of continuous homogeneous polynomials (cf.\ \cite[Proposition 1.1.16]{dahmen2011}).
  We then also say that $f$ is of class $C^\omega_\C$.
  \item If $f \colon U \rightarrow F$ is a $C^1_\C$-map and $F$ is complete, then $f$ is $C^\omega_\C$ by \cite[Remark 2.2]{hg2002a}.
  \end{enumerate}\label{rem: analytic}
\end{rem}

Now we discuss real analyticity for maps between infinite-dimensional spaces. 
Consider the one-dimensional case: 
A map $\R \rightarrow \R$ is real analytic if it extends to a complex analytic map $\C \supseteq U \rightarrow \C$ on an open $\R$-neighbourhood $U$ in $\C$.
We can proceed analogously for locally convex spaces by replacing $\C$ with a complexification.

\begin{setup}[Complexification of a locally convex space]
 Let $E$ be a real locally convex topological vector space. Endow $E_\C \coloneq E \times E$ with the following operation 
 \begin{displaymath}
  (x+iy).(u,v) \coloneq (xu-yv, xv+yu) \quad \text{ for } x,y \in \R, u,v \in E
 \end{displaymath}
 The complex vector space $E_\C$ with the product topology is called the \emph{complexification} of $E$. We identify $E$ with the closed real subspace $E\times \set{0}$ of $E_\C$. 
\end{setup}

\begin{defn}
 Let $E$, $F$ be real locally convex spaces and $f \colon U \rightarrow F$ defined on an open subset $U$. 
 We call $f$ \emph{real analytic} (or $C^\omega_\R$) if $f$ extends to a $C^\infty_\C$-map $\tilde{f}\colon \tilde{U} \rightarrow F_\C$ on an open neighbourhood $\tilde{U}$ of $U$ in the complexification $E_\C$.\footnote{If $E$ and $F$ are Fr\'{e}chet spaces, real analytic maps in the sense just defined coincide with maps which are continuous and can be locally developed into a power series. (see \cite[Proposition 4.1]{MR2402519})} 
\end{defn}

Now the important insight is that the calculus outlined admits a chain rule and many of the usual results of calculus carry over to our setting.
In particular, maps whose derivative vanishes are constant as a version of the fundamental theorem of calculus holds.
Moreover, the chain rule holds in the following form:

\begin{lem}[{Chain Rule {\cite[Propositions 1.12, 1.15, 2.7 and 2.9]{hg2002a}}}]
 Fix $k \in \N_0 \cup \{\infty , \omega\}$ and $\K \in \{\R , \C\}$ together with $C^k_\K$-maps $f \colon E \supseteq U \rightarrow F$ and $g \colon H \supseteq V \rightarrow E$ defined on open subsets of locally convex spaces.
 Assume that $g (U) \subseteq V$. Then $f\circ g$ is of class $C^{k}_\K$ and the first derivative of $f\circ g$ is given by 
  \begin{displaymath}
   d(f\circ g) (x;v) = df(g(x);dg(x,v)) \quad  \text{for all } x \in U ,\ v \in H
  \end{displaymath}
\end{lem}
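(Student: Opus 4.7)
The plan is to prove the identity $d(f\circ g)(x;v) = df(g(x); dg(x,v))$ first for $k=1$ and then bootstrap to all higher orders of differentiability. Throughout I work on the open set $V \subseteq H$ where $f \circ g$ is defined (via $g(V) \subseteq U$, which is the intended hypothesis).

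\textbf{Step 1: The $C^1_\K$ case.} Fix $x \in V$ and $v \in H$, and choose $\varepsilon > 0$ small enough that $x + tv \in V$ and $g(x) + s(g(x+tv) - g(x)) \in U$ for $|t| < \varepsilon$ and $s \in [0,1]$ (possible by continuity of $g$ and openness of $V,U$). The version of the fundamental theorem of calculus available in Bastiani calculus gives
\begin{displaymath}
 t^{-1}\bigl(f(g(x+tv)) - f(g(x))\bigr) = \int_0^1 df\bigl(g(x) + s(g(x+tv)-g(x));\, t^{-1}(g(x+tv)-g(x))\bigr)\, ds,
\end{displaymath}
after a similar FTC computation applied first to $g$ to justify the form of the argument. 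As $t \to 0$, the base point converges to $g(x)$, the direction $t^{-1}(g(x+tv) - g(x))$ converges to $dg(x;v)$, and continuity of $df \colon U \times E \to F$ together with local uniformity in $s$ on the compact interval $[0,1]$ lets me pass the limit under the integral. This yields both existence of $d(f\circ g)(x;v)$ and the claimed formula. Continuity of $d(f \circ g)$ as a function of $(x,v)$ follows from continuity of $df, dg, g$ and the obvious composition.

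\textbf{Step 2: Induction for $k \in \N \cup \{\infty\}$.} Having the formula $d(f\circ g)(x;v) = df(g(x); dg(x,v))$, I observe that $d(f \circ g)$ is built from $f$, $g$, $df$, $dg$ and the evaluation/pairing map via the composite
\begin{displaymath}
 (x,v) \longmapsto (g(x), dg(x,v)) \longmapsto df(g(x); dg(x,v)).
\end{displaymath}
If $f, g$ are $C^k_\K$ with $k \geq 2$, then $df, dg$ are $C^{k-1}_\K$ on $U \times E$ and $V \times H$ respectively, and the inductive hypothesis applied to the composite above shows $d(f\circ g)$ is $C^{k-1}_\K$, whence $f \circ g$ is $C^k_\K$. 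The case $k = \infty$ then follows at once.

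\textbf{Step 3: The analytic cases.} For $k = \omega_\C$ = $C^\infty_\C$, Remark \ref{rem: analytic} identifies $C^\infty_\C$ with complex analyticity, and Step 2 for $k = \infty_\C$ already gives the result. For $k = \omega_\R$, use the complexification: by definition, $f$ and $g$ extend to $C^\infty_\C$-maps $\tilde f \colon \tilde U \to F_\C$ and $\tilde g \colon \tilde V \to E_\C$ on open neighbourhoods in $E_\C$ and $H_\C$. Because $\tilde g$ is continuous and $\tilde g|_V = g$ takes values in $U \subseteq \tilde U$, there is an open neighbourhood $\tilde V' \subseteq \tilde V$ of $V$ in $H_\C$ with $\tilde g(\tilde V') \subseteq \tilde U$. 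The complex chain rule from Step 2 then yields $\tilde f \circ \tilde g \colon \tilde V' \to F_\C$ of class $C^\infty_\C$, and it restricts to $f \circ g$ on $V$, proving real analyticity.

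\textbf{Main obstacle.} The genuinely delicate part is Step 1: the interchange of limit and integral requires that the integrand $s \mapsto df(g(x) + s(g(x+tv)-g(x));\, t^{-1}(g(x+tv) - g(x)))$ converges to $df(g(x); dg(x,v))$ \emph{uniformly} in $s \in [0,1]$ as $t \to 0$. In a general locally convex (not normed) target, this demands care with the continuity of $df$ on the compact-looking set $\{g(x) + s(g(x+tv) - g(x)) : s \in [0,1],\, |t| < \varepsilon\}$ and the joint continuity in both slots, and is the reason one must work with Bastiani's continuity requirement on $d^{(k)}f$ rather than mere existence of directional derivatives.
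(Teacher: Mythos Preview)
The paper does not prove this lemma at all: it merely cites \cite[Propositions 1.12, 1.15, 2.7 and 2.9]{hg2002a} and moves on. Your proposal is therefore not competing with any argument in the paper itself, but rather supplying what the paper outsources.

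That said, your argument is essentially the standard proof one finds in the cited source. The integral representation in Step~1, the inductive bootstrap in Step~2, and the complexification argument in Step~3 are exactly the ingredients of Gl\"ockner's treatment. A few minor remarks: (i) you correctly spotted and fixed the typo in the statement (the hypothesis should read $g(V)\subseteq U$, not $g(U)\subseteq V$); (ii) in Step~1 you should note that the weak (Riemann) integral of a continuous curve on $[0,1]$ into a locally convex space exists in the completion of $F$, and that the resulting limit actually lands back in $F$ because it is identified as a directional derivative---this is handled in \cite{hg2002a} but is worth flagging since you do not assume completeness of $F$; (iii) the uniform-in-$s$ convergence you highlight as the ``main obstacle'' is indeed the crux, and follows cleanly from joint continuity of $df$ and compactness of $[0,1]$, exactly as you say.
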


The calculus developed so far extends easily to maps which are defined on non-open sets.
This situation occurs frequently if one wants to solve differential equations defined on closed intervals (one can generalise this even further see \cite{alas2012}).

\begin{defn}[Differentials on non-open sets]
  Let $[a,b] \subseteq \R$ be a closed interval wiht $a < b$. 
  A continuous mapping $f \colon [a,b] \rightarrow F$ is called $C^r$ if $f|_{]a,b[} \colon ]a,b[ \rightarrow F$ is $C^r$ and each of the maps\\ 
  $d^{(k)} (f|_{]a,b[}) \colon ]a,b[ \times E^k \rightarrow F$ admits a continuous extension $d^{(k)}f \colon [a,b] \times \R^k \rightarrow F$ (which is then unique).
  
  Let us agree on a special notation for differentials of maps on intervals: Define the map $\frac{\partial}{\partial t}f \colon [a,b]\rightarrow E, \frac{\partial}{\partial t}f(t) \coloneq df(t)(1)$.  
  If $f$ is a $C^r$-map, define recursively $\frac{\partial^k}{\partial t^k}f (t) \coloneq \frac{\partial}{\partial t}(\frac{\partial^{k-1}}{\partial t^{k-1}}f)(t)$
  for $k \in \N_0$ such that $k \leq r$.   
 \label{defn: non-open}
\end{defn}

\begin{ex}[Topologies on spaces of differentiable maps]\label{ex: funspace}
Consider a locally convex vector space $(E,\{p_i \mid i \in I\})$ over $\K \in \{\R,\C\}$ and a number $k \in \N_0 \cup \{\infty\}$. 
We define the vector space $C^k ([0,1] , E)$ of all $C^k_\K$-mappings $f \colon [0,1] \rightarrow E$ with the pointwise vector space operations.
This space is a locally convex space (over $\K$) with the \emph{topology of uniform convergence}, i.e.\ the topology generated by the family of semi-norms 
\begin{displaymath}
 \norm{f}_{i,r} \coloneq \sup_{t \in [0,1]} p_i \left(\frac{\partial^r}{\partial t^r} f (t)\right)  \quad \text{ for } i \in I \text{ and } 0\leq r \leq k
\end{displaymath}
The idea here is that the topology gives one control over the function and its derivatives. 
Note that if $E$ is a Banach space and $k < \infty$, these spaces are Banach spaces and for $k = \infty$ the space is a Fr\'{e}chet space (see \cite[4.3]{michor1980} and \cite[Remark 3.2]{MR1934608}) 
\end{ex}

Having the chain rule at our disposal we can define manifolds and related constructions which are modelled on locally convex spaces.

\begin{defn} Fix a Hausdorff topological space $M$ and a locally convex space $E$ over $\K \in \set{\R,\C}$. 
An ($E$-)manifold chart $(U_\kappa, \kappa)$ on $M$ is an open set $U_\kappa \subseteq M$ together with a homeomorphism $\kappa \colon U_\kappa \rightarrow V_\kappa \subseteq E$ onto an open subset of $E$. 
Two such charts are called $C^r$-compatible for $r \in \N_0 \cup \set{\infty,\omega}$ if the change of charts map $\nu^{-1} \circ \kappa \colon \kappa (U_\kappa \cap U_\nu) \rightarrow \nu (U_\kappa \cap U_\nu)$ is a $C^r$-diffeomorphism. 
A $C_\K^r$-atlas of $M$ is a family of pairwise $C^r$-compatible manifold charts, whose domains cover $M$. Two such $C^r$-atlases are equivalent if their union is again a $C^r$-atlas. 

A \emph{locally convex $C^r$-manifold} $M$ modelled on $E$ is a Hausdorff space $M$ with an equivalence class of $C^r$-atlases of ($E$-)manifold charts.
\end{defn}

 Direct products of locally convex manifolds, tangent spaces and tangent bundles as well as $C^r$-maps of manifolds may be defined as in the finite dimensional setting (see \cite[I.3]{neeb2006}). 
 The advantage of this construction is that we can now give a very simple answer to the question, what an infinite-dimensional Lie group is:
 
\begin{defn}
A \emph{Lie group} is a group $G$ equipped with a $C^\infty$-manifold structure modelled on a locally convex space, such that the group operations are smooth.
If the group operations are in addition ($\K$-)analytic we call $G$ a ($\K$)-\emph{analytic} Lie group. 
\end{defn}

\subsection*{Topologies on the Butcher group}

\begin{setup}\label{setup: affine}
 The function space $\K^{\RT_0}$ carries a natural topology, called the \emph{topology of pointwise convergence}. 
 This topology is given by the semi-norms 
  \begin{displaymath}
   p_\tau \colon \K^{\RT_0} \rightarrow \K , a \mapsto |a(\tau)|
  \end{displaymath}
 and turns $\K^{\RT_0}$ into a locally convex space. 
 To see this note that the  map \begin{displaymath}
          \Psi \colon \K^{\RT_0} \rightarrow \prod_{\tau \in \RT_0} \K , a \mapsto \an = (a(\tau))_{\tau \in \RT_0}
         \end{displaymath}
 is a vector space isomorphism which is also a homeomorphism if we endow the left hand side with the topology of pointwise convergence and the right hand side with the product topology.
 By abuse of language, we will refer to the topology of pointwise convergence on $\K^{\RT_0}$ also as the product topology.
 Since $\RT_0$ is countable, the space $\K^{\RT_0}$ is a Fr\'{e}chet space by Example \ref{ex: lcvx:spac} (b).
\end{setup}

\begin{setup}\label{rem: BGP:top}
 For the rest of this paper we canonically identify $\K^{\RT}$ as a subspace of $\K^{\RT_0}$ via 
  \begin{displaymath}
   \K^{\RT} \cong \{a \in \K^{\RT_0} \mid a(\emptyset) = 0\} = \ev_\emptyset^{-1} (0) \subseteq \K^{\RT_0}
  \end{displaymath}
 
 Now the (complex) Butcher group is the affine subspace $e+ \K^{\RT}$, where $e$ is the unit element in the (complex) Butcher group.
 Hence it is a manifold modelled on the locally convex space $\K^{\RT}$ with a global manifold chart given by the translation $-e$.
\end{setup}

As the Butcher group is an affine subspace of the locally convex space $\K^{\RT_0}$ (with the topology of pointwise convergence) we obtain the following useful facts.

\begin{lem}
 \begin{enumerate}
  \item For each $\tau \in \RT_0$ the mapping $\ev_\tau \colon \K^{\RT_0} \rightarrow \K , a \mapsto a(\tau)$ is a continuous linear map.
  \item $\K^{\RT}$ is a closed subspace of $\K^{\RT_0}$. 
  \item Let $M$ be an open subset of a locally convex space $F$ and consider a map $f\colon U \rightarrow e + \K^{\RT} \subseteq \K^{\RT_0}$ into the affine subspace (i.e.\ the (complex) Butcher group). 
 Then $f$ is of class $C^k_\K$ for $k \in \N_0 \cup \{\infty\}$ if and only if $\ev_\tau \circ f$ is of class $C^k_\K$ for all $\tau \in \RT$. 
 \end{enumerate}\label{lem: bgp:prod}
\end{lem}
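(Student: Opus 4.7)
Parts (a) and (b) are essentially immediate from Setup \ref{setup: affine} and Setup \ref{rem: BGP:top}. Under the isomorphism $\Psi$, the map $\ev_\tau$ corresponds to the coordinate projection onto the $\tau$-factor of $\prod_{\theta \in \RT_0} \K$, which is continuous and linear with respect to the product topology (cf.\ Example \ref{ex: lcvx:spac} (b)); this gives (a). For (b), Setup \ref{rem: BGP:top} has already identified $\K^{\RT} = \ev_\emptyset^{-1}(\{0\})$, and this is closed as the preimage of the closed point $\{0\} \subseteq \K$ under the continuous map $\ev_\emptyset$ from (a).

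For (c), the plan is to reduce the statement to the universal property of the product topology. Since Setup \ref{rem: BGP:top} equips the Butcher group with a global manifold chart given by translation by $-e$, differentiability of $f \colon U \to e + \K^\RT$ is the same as differentiability of $f - e \colon U \to \K^{\RT_0}$, so we may work in the model space $\K^{\RT_0} \cong \prod_{\tau \in \RT_0} \K$. The forward direction is the chain rule: by (a), $\ev_\tau$ is continuous and linear, hence smooth by Example \ref{ex: deriv: linear}, so if $f$ is $C^k_\K$, then $\ev_\tau \circ f$ is $C^k_\K$ as a composition.

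The converse is the heart of the argument, and I would prove it by induction on the order $k$. For $k = 0$ continuity into a product is coordinatewise, as recalled in Example \ref{ex: lcvx:spac} (b). For the induction step, suppose each $\ev_\tau \circ f$ is $C^k_\K$ with $k \geq 1$. The candidate for the directional derivative is the limit
\[
df(x,h) = \lim_{t \to 0} t^{-1}\bigl(f(x+th) - f(x)\bigr)
\]
in $\K^{\RT_0}$. Because convergence in the product topology is coordinatewise, this limit exists if and only if it exists in every coordinate $\ev_\tau$; by hypothesis each such coordinate limit exists and equals $d(\ev_\tau \circ f)(x,h)$. Thus $df(x,h)$ exists for all $(x,h) \in U \times E$ and satisfies $\ev_\tau \circ df = d(\ev_\tau \circ f)$ for every $\tau \in \RT_0$. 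Each $d(\ev_\tau \circ f) \colon U \times E \to \K$ is $C^{k-1}_\K$ by assumption, so the induction hypothesis applied to the map $df \colon U \times E \to \K^{\RT_0}$ shows that $df$ is $C^{k-1}_\K$, and therefore $f$ is $C^k_\K$. The $C^\infty_\K$ case follows since $C^\infty_\K = \bigcap_{k \in \N_0} C^k_\K$. The only genuine point to verify is the interchange of the defining limit with coordinate evaluation, and this is free from the definition of the product topology; no real obstacle is expected.
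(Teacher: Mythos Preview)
Your proofs of (a) and (b) match the paper's exactly. For (c), the paper takes a shortcut: it identifies $f$ with a map into $\K^{\RT_0}$ (noting $\ev_\emptyset \circ f \equiv 1$), then invokes an external reference (\cite[Lemma~3.10]{alas2012}) which says that a map into a product is $C^k_\K$ if and only if all its components are; the reduction from $\RT_0$ to $\RT$ is then the observation that the $\emptyset$-component is constant. You instead prove that product lemma from scratch by induction on $k$, using that limits in the product topology are detected coordinatewise and that $f$ is $C^k$ once $df$ exists and is $C^{k-1}$. Your route is more self-contained and makes the mechanism transparent; the paper's is shorter but offloads the content to a citation. Two small points worth tightening: make explicit that the $\emptyset$-coordinate is automatically $C^k$ since $\ev_\emptyset \circ f \equiv 1$, so checking $\tau \in \RT$ suffices; and note that the implication ``$df$ is $C^{k-1}$ $\Rightarrow$ $f$ is $C^k$'' follows because $d^{(j)}f(x,y_1,\dots,y_j) = d^{(j-1)}(df)\bigl((x,y_1),(y_2,0),\dots,(y_j,0)\bigr)$, which you are implicitly using.
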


\begin{proof} 
 \begin{enumerate}
  \item In \ref{setup: affine} we have seen that $\K^{\RT_0}$ is isomorphic (as a locally convex space) to the space $\prod_{\tau \in \RT_0} \K$ with the direct product topology. 
 Now each projection $\pi_\tau \colon \prod_{\tau \in \RT_0} \K \rightarrow \K$ is continuous and linear by Example \ref{ex: lcvx:spac} (b). 
 Clearly $\ev_\tau = \pi_\tau \circ \Psi$ (where $\Psi$ is the isomorphism of locally convex spaces from \ref{setup: affine}), whence (a) follows. 
  \item Note that $\K^{\RT} = \ev_\emptyset^{-1} (0)$ holds, whence it is closed in $\K^{\RT_0}$ since $\ev_\emptyset$ is continuous. 
  \item Using the manifold structure of the affine subspace and Remark \ref{rem: BGP:top}, identify $f$ with a mapping into $\K^{\RT_0}$ (with $\ev_\emptyset \circ f \equiv 1$).
 Now $f$ is of class $C^{k}_\K$ if and only if $\Psi \circ f$ is of class $C^{k}_\K$ (since $\Psi$ is a vector space isomorphism).
 This is the case if and only if $\pi_\tau \circ \Psi \circ f = \ev_\tau \circ f$ is of class $C^k_\K$ for each $\tau \in \RT_0$ (as a special case of \cite[Lemma 3.10]{alas2012}).
 However, from $\ev_\emptyset \circ f \equiv 1$ we deduce that $f$ is of class $C^k_\K$ if and only if $\ev_\tau \circ f$ is of class $C^k_\K$ for all $\tau \in \RT$.
 This proves the assertion.
 \end{enumerate}
\end{proof}

Since the (complex) Butcher group is an affine subspace of $\K^{\RT_0}$, tangent mappings are simply given by derivatives which can be computed directly in $\K^{\RT_0}$. 

 Consider a curve $c(t) \coloneq b + t\an$ which takes its image in $e + \C^{\RT}$ and satisfies $c(0) = b$ and $\left.\tfrac{\partial}{\partial t}\right|_{t=0} c(t) = \an$.
 By definition the tangent map $T_b f$ takes the derivative of a $C^1$-curve $c(t)$ in $\BGC$ with $c(0)=b$ and derivative $\left.\tfrac{\partial}{\partial t}\right|_{t=0} c(t) = \an$ to $\left.\tfrac{\partial}{\partial t}\right|_{t=0} f(c(t))$.

\begin{lem}\label{lem: tspace}
 The tangent space $T_b \BGC$ at a point $b \in \BGC$ coincides with $\C^{\RT}$.
 Moreover, the tangent map of a $C^1$-map $f \colon \BGC \rightarrow \BGC$ is given by the formula 
 \begin{displaymath}
  T_b f (\an) = \left.\frac{\partial}{\partial t}\right|_{t=0}f(b+t\an) \quad \text{for all } \an \in \C^{\RT} = T_b \BGC.
 \end{displaymath}
\end{lem}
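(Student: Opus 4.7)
The plan is to exploit the fact that $\BGC = e + \C^{\RT}$ carries a global manifold chart, namely the translation $\varphi \colon \BGC \to \C^{\RT}$, $x \mapsto x - e$ (cf.\ \ref{rem: BGP:top}). Since $\varphi$ is a global diffeomorphism onto the open (in fact, whole) subset $\C^{\RT}$ of the locally convex space $\C^{\RT}$, the tangent bundle $T\BGC$ is trivialized as $\BGC \times \C^{\RT}$. In particular, for every $b \in \BGC$, the tangent space $T_b \BGC$ is canonically identified with the model space $\C^{\RT}$, which gives the first assertion.

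For the second assertion, I would represent tangent vectors by the simplest possible curves available in the affine picture. Given $\an \in \C^{\RT} = T_b \BGC$, the curve $c \colon \R \to \BGC$, $c(t) \coloneq b + t\an$, is well-defined for all $t \in \R$ because $b \in e + \C^{\RT}$ and $t\an \in \C^{\RT}$, so $c(t) \in e + \C^{\RT} = \BGC$. In the global chart $\varphi$, the curve becomes $\varphi \circ c (t) = b - e + t\an$, which is an affine curve in $\C^{\RT}$ whose derivative at $t = 0$ is $\an$. Hence $c$ represents the tangent vector $\an \in T_b \BGC$ in the chosen identification.

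By the definition of the tangent map and the chain rule (Lemma on the chain rule in the excerpt), for any $C^1$-map $f \colon \BGC \to \BGC$,
\begin{displaymath}
 T_b f(\an) = \left. \frac{\partial}{\partial t} \right|_{t=0} (f \circ c)(t) = \left. \frac{\partial}{\partial t} \right|_{t=0} f(b + t\an),
\end{displaymath}
where on the right we used that $T_{f(b)} \BGC$ is again identified with $\C^{\RT}$ via the same global chart, so the derivative of $f \circ c$ at $0$ is computed componentwise in the ambient space $\C^{\RT_0}$ (coherent with Lemma \ref{lem: bgp:prod}(c)).

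The only nonroutine point is conceptual rather than technical: one must check that the canonical identification $T_b \BGC \cong \C^{\RT}$ induced by the global chart $\varphi$ is independent of the affine shift and is compatible with the ambient directional derivative in $\C^{\RT_0}$. This follows immediately from the fact that $\varphi$ is the restriction of a translation on the ambient Fr\'{e}chet space, whose differential is the identity on $\C^{\RT}$, so no further identifications are required.
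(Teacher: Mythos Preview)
Your proposal is correct and follows essentially the same approach as the paper: both use the global affine chart $x \mapsto x - e$ to identify $T_b \BGC$ with $\C^{\RT}$ and then represent tangent vectors by the affine curve $c(t) = b + t\an$, from which the formula for $T_b f$ follows by the definition of the tangent map via curves. Your write-up is simply more detailed about the chart-level identifications than the paper's brief discussion preceding the lemma.
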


\begin{lem}\label{lem: mspace:compl}
 The space $\C^{\RT_0}$ is the complexification of $\R^{\RT_0}$.
\end{lem}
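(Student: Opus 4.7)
The plan is to exhibit an explicit isomorphism between the complexification $(\R^{\RT_0})_\C = \R^{\RT_0} \times \R^{\RT_0}$ and $\C^{\RT_0}$, both as complex vector spaces and as topological spaces. Define
\begin{displaymath}
 \Phi \colon \R^{\RT_0} \times \R^{\RT_0} \rightarrow \C^{\RT_0}, \quad \Phi(u,v)(\tau) \coloneq u(\tau) + i\, v(\tau).
\end{displaymath}
I would first observe that $\Phi$ is a bijection of underlying sets, with inverse given by separating a tree map $a \colon \RT_0 \to \C$ into its pointwise real and imaginary parts.

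The next step is to verify $\C$-linearity with respect to the complex structure on $(\R^{\RT_0})_\C$. For $x+iy \in \C$ and $(u,v) \in \R^{\RT_0}\times\R^{\RT_0}$, one computes
\begin{displaymath}
 \Phi\bigl((x+iy).(u,v)\bigr)(\tau) = \Phi(xu-yv,\,xv+yu)(\tau) = (x+iy)\bigl(u(\tau)+iv(\tau)\bigr) = (x+iy)\cdot \Phi(u,v)(\tau),
\end{displaymath}
so $\Phi$ is $\C$-linear, and additivity is immediate from the pointwise definition.

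It remains to check that $\Phi$ is a homeomorphism with respect to the topologies involved, i.e.\ the product topology on $\R^{\RT_0}\times\R^{\RT_0}$ (coming from the topologies of pointwise convergence on each factor) and the topology of pointwise convergence on $\C^{\RT_0}$. By Lemma~\ref{lem: bgp:prod}(a) and its analogue for $\R$, it suffices to verify that, for each $\tau \in \RT_0$, the map $\ev_\tau\circ \Phi \colon \R^{\RT_0}\times\R^{\RT_0} \to \C$ is continuous and that $(\ev_\tau^{\R}\times \ev_\tau^{\R})\circ \Phi^{-1}$ is continuous in the other direction; both reduce to the elementary fact that the bijection $\R\times \R \to \C$, $(x,y)\mapsto x+iy$, is a homeomorphism, because the seminorms $|u(\tau)|$, $|v(\tau)|$ on the left generate the same topology as the seminorms $|u(\tau)+iv(\tau)|$ on the right (via $\max(|u(\tau)|,|v(\tau)|) \leq |u(\tau)+iv(\tau)| \leq |u(\tau)|+|v(\tau)|$).

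There is no real obstacle here; the statement is essentially the assertion that complexification commutes with arbitrary (countable) products of one-dimensional real locally convex spaces, and everything is reduced pointwise to the identification $\R_\C = \C$. The only thing one has to be a bit careful about is matching the prescribed complex structure on $E_\C = E\times E$ with componentwise complex multiplication on $\C^{\RT_0}$, which is handled by the direct computation above.
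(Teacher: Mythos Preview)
Your proof is correct and follows essentially the same approach as the paper, which simply invokes the identifications $\C^{\RT_0}\cong\prod_{\tau\in\RT_0}\C$ and $\R^{\RT_0}\cong\prod_{\tau\in\RT_0}\R$ together with the fact that $\C$ is the complexification of $\R$. You have merely made explicit the map and the componentwise verification that the paper leaves to the reader.
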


\begin{proof}
 Taking identifications $\C^{\RT_0} \cong \prod_{\tau \in \RT_0} \C$ and $\R^{\RT_0} \cong \prod_{\tau \in \RT_0} \R$ the assertion follows from the definition of the product topology since $\C$ is the complexification of $\R$.
\end{proof}

\section{A natural Lie group structure for the Butcher group}\label{sect: BG-LGP}

In this section we construct a Fr\'{e}chet Lie group structure for the Butcher group. 
We will use the notation introduced in the previous section. 
Up to now, we have already obtained a topology on the (complex) Butcher group, which turns it into a complete metric space.
Moreover, this topology turns the (complex) Butcher group into an infinite-dimensional manifold modelled on the space $\K^{\RT}$.
We will now see that the group operations are smooth with respect to this structure, i.e.\ the group is an infinite-dimensional Lie group.

 \begin{thm}\label{thm: Butcher:Lie}
  The group $\BGC$ is a complex Fr\'{e}chet Lie group modelled on the space $\C^{\RT}$. 
  The complex Butcher group contains the Butcher group $\BGp$ as a real analytic Lie subgroup modelled on the Fr\'{e}chet space $\R^{\RT}$.
 \end{thm}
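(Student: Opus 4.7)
The plan is to exploit the global chart structure of $\BGC$ together with the fact that the group operations are given by finite polynomial expressions in the coordinate evaluations. By Setup~\ref{rem: BGP:top}, $\BGC = e + \C^{\RT}$ is an affine subspace of the Fr\'{e}chet space $\C^{\RT_0}$, so the single translation chart $a \mapsto a - e$ already equips $\BGC$ with a $C^\omega_\C$-manifold structure modelled on $\C^{\RT}$, and $\C^{\RT}$ is Fr\'{e}chet by Example~\ref{ex: lcvx:spac}\,(b). It remains to verify that multiplication $\mu\colon \BGC\times\BGC\to \BGC$ and inversion $\iota\colon\BGC\to\BGC$ are holomorphic, and then transfer the conclusion to the real form $\BGp$.

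For multiplication, by Lemma~\ref{lem: bgp:prod}\,(c) it suffices to check that $\ev_\tau \circ \mu$ is $C^\omega_\C$ for every $\tau\in\RT$. Formula \eqref{eq: BGP:mult} yields
\[
 (\ev_\tau\circ\mu)(a,b) \;=\; \sum_{s\in \OST(\tau)} b(s_\tau)\,\prod_{\theta\in \tau\setminus s} a(\theta),
\]
which, since $\RT$ consists of trees with finitely many vertices, is a \emph{finite} sum of finite products of values $\ev_\theta(a)$ and $\ev_{\theta'}(b)$ with $|\theta|,|\theta'|\le |\tau|$. By Lemma~\ref{lem: bgp:prod}\,(a) each $\ev_\sigma$ is continuous $\C$-linear; composed with the two projections $\BGC\times\BGC\to\BGC$, these give continuous $\C$-linear functionals, which are $C^\omega_\C$ by Example~\ref{ex: deriv: linear} combined with Remark~\ref{rem: analytic}. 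Finite sums and finite products of $\C$-valued $C^\omega_\C$-maps are again $C^\omega_\C$ (by the chain rule and the obvious polynomial calculation, cf.\ Remark~\ref{rem: analytic}), so $\ev_\tau\circ\mu$ is $C^\omega_\C$ and hence $\mu$ is $C^\omega_\C$. The exact same argument, applied to \eqref{eq: BGP:invers} (which for each fixed $\tau$ is again a finite polynomial in finitely many $\ev_\theta(a)$), shows that inversion is holomorphic. This proves that $\BGC$ is a complex analytic Fr\'{e}chet Lie group.

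For the real Butcher group, note that $\BGp = \BGC \cap (\R^{\RT_0}\times\{0\})$ is closed in $\BGC$ (being the preimage of $0$ under the continuous projection $\C^{\RT_0}\to \C^{\RT_0}$, $a\mapsto \tfrac{1}{2}(a-\bar a)$, or directly since $\R^{\RT}$ is closed in $\C^{\RT}$), and is the affine subspace $e + \R^{\RT}$, so it inherits a single-chart $C^\omega_\R$-manifold structure modelled on the Fr\'{e}chet space $\R^{\RT}$. The formulas \eqref{eq: BGP:mult} and \eqref{eq: BGP:invers} have real coefficients and map real-valued tree maps to real-valued tree maps, so $\mu$ and $\iota$ restrict to operations on $\BGp$. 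By Lemma~\ref{lem: mspace:compl}, $\C^{\RT_0}$ is the complexification of $\R^{\RT_0}$, and the holomorphic maps $\mu,\iota$ constructed above are the required complex-analytic extensions of the restricted real operations defined on an open (indeed, all of) neighbourhood of $\BGp$ in the complexification. By the definition of real analyticity, the restrictions are therefore $C^\omega_\R$, and $\BGp$ is a real analytic Lie subgroup of $\BGC$.

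The only substantive point to get right is the finiteness observation: for each fixed $\tau\in\RT$, the index sets $\OST(\tau)$ and $\pP(\tau)$ are finite and all trees appearing in the summands have order at most $|\tau|$, so $\ev_\tau$ composed with multiplication or inversion is literally a polynomial in finitely many continuous linear functionals. This is what allows us to bypass any subtle convergence issues and conclude real and complex analyticity essentially for free from Lemma~\ref{lem: bgp:prod}.
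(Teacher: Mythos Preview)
Your proof is correct and follows essentially the same route as the paper: reduce holomorphy of multiplication and inversion to the componentwise statement via Lemma~\ref{lem: bgp:prod}, observe that each $\ev_\tau$-component of \eqref{eq: BGP:mult} and \eqref{eq: BGP:invers} is a finite polynomial in continuous linear functionals, and then pass to $\BGp$ using that $\C^{\RT}$ is the complexification of $\R^{\RT}$ (Lemma~\ref{lem: mspace:compl}). The only cosmetic remark is that Lemma~\ref{lem: bgp:prod}\,(c) is stated for $k\in\N_0\cup\{\infty\}$, so strictly speaking you obtain $C^\infty_\C$ componentwise and then invoke Remark~\ref{rem: analytic} to identify $C^\infty_\C$ with $C^\omega_\C$; you do cite this remark, so the logic is sound.
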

 
 \begin{proof} 
  We will first only consider the complex Butcher group $\BGC$ and prove that it is a complex Lie group.
  Recall from \ref{rem: BGP:top} that $\BGC$ is a manifold modelled on a Fr\'{e}chet space.
  Let us now prove that the group operations of the complex Butcher group are holomorphic (i.e.\ $C_\C^\infty$-maps) with respect to this manifold structure.\bigskip
 
 \textbf{Step 1: Multiplication in $\BGC$ is holomorphic.} 
 Consider the multiplication $m\colon \BGC \times \BGC \rightarrow \BGC$ on the affine subspace $\BGC$. 
 Recall from Lemma \ref{lem: bgp:prod} (b) that $m$ is holomorphic if and only if $\ev_{\tau} \circ m \colon \BGC \times \BGC \rightarrow \K$ is holomorphic for each $\tau \in \RT$.
 Hence we fix $\tau \in \RT$ and $a,b \in \BGC$ and obtain from \eqref{eq: BGP:mult} the formula
 \begin{align}
  \ev_\tau \circ m (a ,b) &= \sum_{s \in \OST (\tau)} b (s_\tau) \prod_{\theta \in \tau \setminus s} a (\theta). \label{eq: locmult}
 \end{align}
 We consider the summands in \eqref{eq: locmult} independently and fix $s \in \OST (\tau)$. 
 There are two cases for the evaluation $b(s_{\tau})$:
 
 If $s$ is empty then the map $b(s_\tau) = b(\emptyset) = \ev_\emptyset (b) \equiv 1$ is constant.
 Trivially in this case the map is holomorphic in $b$.
 
 Otherwise $s_{\tau}$ equals a rooted tree, whence $\ev_{s_{\tau}} \colon \C^{\RT_0} \rightarrow \C$ is continuous linear by Lemma \ref{lem: bgp:prod} and thus holomorphic in $b$ by Example \ref{ex: deriv: linear}. 
 Observe that the same analysis shows that each of the factors $\ev_\theta (a)$ are holomorphic in $a$ for all $\theta \in \tau \setminus s$. 
 Hence each summand in \eqref{eq: locmult} is a finite product of holomorphic maps and in conclusion multiplication in the group $\BGC$ is holomorphic.\bigskip
 
 \textbf{Step 2: Inversion in $\BGC$ is holomorphic.}
 Let $\iota \colon \BGC \rightarrow \BGC$ be the inversion in the complex Butcher group. 
 Again it suffices to prove that $\ev_\tau \circ \iota$ is holomorphic for each $\tau \in \RT$. 
 From \eqref{eq: BGP:invers} we derive for $\tau \in \RT_0$ and $a \in \BGC$ the formula 
  \begin{displaymath}
   \ev_\tau \circ \iota (a) = \sum_{p \in \pP (\tau)} (-1)^{|p_{\tau}|} a(\tau \setminus p) = \sum_{p \in \pP (\tau)} (-1)^{|p_{\tau}|} \prod_{\theta \in \tau \setminus p} a(\theta )
  \end{displaymath}
 Reasoning as in Step 1, we see that $\ev_\tau \circ \iota$ is holomorphic for each $\tau$ as a finite product of holomorphic maps, whence inversion in $\BGC$ is holomorphic.
 Summing up $\BGC$ is a complex Lie group modelled on the Fr\'{e}chet space $\C^{\RT}$.
 
 By Lemma \ref{lem: mspace:compl} the complexification $(\R^{\RT})_\C$ of the Fr\'{e}chet space $\R^{\RT}$ is the complex Fr\'{e}chet space $\C^{\RT}$. 
 We will from now on identify $\R^{\RT}$ with the real subspace $(\R \times \set{0})^{\RT}$ of $\C^{\RT} \subseteq \C^{\RT_0}$.
 By construction the Butcher group $\BGp$ is a real subgroup of $\BGC$ such that $(\R \times \{0\})^{\RT} \cap \BGC = \BGp$. 
 Moreover, the group operations of $\BGp$ extend in the complexification to the holomorphic operations of $\BGC$. 
 Thus the group operations of the Butcher group are real analytic, whence the Butcher group becomes a real analytic Lie group modelled on the Fr\'{e}chet space $\R^{\RT}$.
 \end{proof}

 Let us put the construction of the Lie group structure on the Butcher group into the perspective of applications in numerical analysis, by interpreting various statements from the literature in the product topology.
 
 \begin{rem}
 \begin{enumerate}
  \item In \cite{Butcher72}, Butcher states ``there is a sense in which elements of $\BGp$ can be approximated
 by elements of $G_0$'', where $G_0$ is the subset of $\BGp$ corresponding to Butcher's generalization of Runge--Kutta methods. The exact sense is stated in \cite[Theorem 6.9]{Butcher72}: 
 If $a\in  \BGp $ and $\RT_f$ is any finite subset of $\RT$, then there is $b \in  G_0$ such that $a|_{\RT_f} = b|_{\RT_f}$.
 
 Now \cite[Theorem 6.9]{Butcher72} implies that $G_0$ is dense in $\BGp$ with the product topology.
Indeed, let $a \in \BGp$ and $\RT_1 \subset \RT_2\subset \dotsb$ be an increasing sequence of subsets of $\RT$ with $\cup_{i=1}^{\infty} \RT_i = \RT$. 
Then loc.cit. implies there are elements $b_1, b_2, \dotsc$ in $G_0$ such that $b_i |_{\RT_i} = a |_{ \RT_i}$ for all $i$.
For any tree $\tau \in \RT$, we then have $b_i(\tau) = a(\tau)$ for all sufficiently large $i$ and $\lim_{i\rightarrow \infty} b_i = a$.
 \item In \cite[Equations (14) \& (15)]{CMSS93}, the authors arrive at differential equations for the coefficients $a_\lambda(\tau)$ for the flow of a modified vector field described by a B-series. 
With the differential structure on the Butcher group introduced in the present paper $a_\lambda$ itself can be described as a curve on $\BGp$ which solves an ordinary differential equation. 
The equation for the Lie group exponential \eqref{eq: ODE:exponential} (which we discuss in Section \ref{sect: LGP:exp}) is equivalent to the system in \cite{CMSS93}.
 \item Maybe the clearest example of use of topological/analytical intuition on the Butcher group in numerical literature is \cite[Remark 9.4]{HLW2006}.
Here the authors state that the ``coefficient mappings $b(\tau)$ [...] lie in the tangent space at $e(\tau)$ of the symplectic subgroup''.
In the present paper (cf.\ Section \ref{sect: subgp}), this statement is made precise, and the tangent space at $e$ of the symplectic subgroup is even the Lie algebra of the symplectic subgroup.
 \end{enumerate}
\end{rem}

 Originally the Butcher group was created by Butcher as a tool in the numerical analysis of Runge--Kutta methods. 
 In particular, Butcher's methods allow one to handle the combinatorial and algebraic difficulties arising in the analysis.
 How does the topology of the Lie group $\BGp$ figure into this picture? 

 The topology on the Lie groups $\BGp$ is the product topology of $\R^{\RT}$.
 From the point of view in numerical analysis, it would be desirable to have a finer topology on $\BGp$. 
 Let us describe a typical example where the product topology is too coarse: 

 \begin{rem}\label{rem: coarse:top}
 Consider an autonomous differential equation 
  \begin{displaymath}
   y' =f(y), \text{ with } f \colon U \rightarrow \R^n,\ n\in \N \text{ real analytic and } U \subseteq \R^n \text{ open}
  \end{displaymath}
 Now the idea is to apply numerical methods, obtain an approximate solution and compare approximate and exact solution.
 Recall that the elements in the Butcher group can be understood as coefficient vectors for numerical methods.
 In the product topology every neighbourhood of an element contains elements with infinitely many non-zero coefficients.
 To assure that the associated methods converges (with infinitely many non-zero coefficients) one has to impose growth restrictions onto \emph{infinitely many} coefficients (cf.\ \cite[Lemma 9]{HL1997}).
 The necessary conditions do not produce open sets in the product topology and we can not hope that the topology of the Lie group $\BGp$ will aid in a direct way in this construction in numerical analysis.
\end{rem}

 As the product topology is too coarse, can we refine it to obtain the necessary open sets? 
 A natural choice for a finer topology is the box topology which we define now.
 
 \begin{defn}[Box topology]
  Consider the sets 
    \begin{displaymath}
     \BN (x, \boldsymbol{\epsilon})\coloneq \setm{a\in \K^{\RT_0}}{|x(\tau) - a(\tau)| <  \boldsymbol{\epsilon}(\tau), \forall \tau \in \RT_0}
    \end{displaymath}
 where $ \boldsymbol{\epsilon}\colon \RT_0 \to ]0,\infty[$ and $x\in \K^{\RT_0}$.
 The sets $\BN (x, \boldsymbol{\epsilon})$ are called \emph{box neighbourhood} or \emph{box}.
 
 Now the set of all boxes $ \BN (x, \boldsymbol{\epsilon})$, where $(x, \boldsymbol{\epsilon})$ runs through $\K^{\RT_0} \times(]0,\infty[)^{\RT_0}$, forms a base of a topology on $\K^{\RT_0}$, called the \emph{box topology}.\footnote{i.e.\ every open set in the box topology can be written as a union of boxes.
 Note that we can not describe this topology via seminorms as it does \textbf{not} turn $\K^{\RT_0}$ into a topological vector space.} 
 \end{defn}
 
 \begin{rem}
 Note that the box topology allows one to control the growth of a function on trees in \textbf{all} trees at once, while the product topology gives only control over finitely many trees.
 Hence the usual growth restrictions from numerical analysis (on all coefficients of a B-series) lead to open (box) neighbourhoods.
  
 Moreover, the box topology is well behaved with respect to the direct sum of locally convex spaces:
 It is known (see \cite{CK98}) that one can associate a Lie algebra to the Butcher group which is given as a vector space by the direct sum 
 \begin{displaymath}
  \K^{(\RT_0)} \coloneq \{a \in \K^{\RT_0} \mid a(\tau)= 0 \text{ for almost all } \tau \in \RT_0\}.
 \end{displaymath}
 Now the natural locally convex topology on $\K^{(\RT_0)}$ is the box topology\footnote{Actually the natural topology is an inductive limit topology. However, as $\RT_0$ is countable this topology coincides with the box topology by \cite[Proposition 4.1.4]{jarchow1980}.}, i.e.\ the topology induced by the inclusion $\K^{(\RT_0)} \subseteq \K^{\RT_0}$ on the direct sum where $\K^{\RT_0}$ is endowed with the box topology. 
 Hence, these results indicate that actually one should consider the box topology.
 \end{rem} 
 
 However, the box topology on $\K^{\RT_0}$ is a very fine topology, i.e.\ it has very many open sets. 
 As a consequence it is especially hard to obtain maps $f \colon X \rightarrow \K^{\RT_0}$ which are continuous (in fact in Lemma \ref{lem: box:ntopgp} we will see that the group operations of the (complex) Butcher group are not continuous with respect to the box topology).
 
 Moreover, since the box topology has so many open sets, it turns $\K^{\RT_0}$ into a disconnected space. 
 By \cite[Theorem 5.1]{MR0160184} the connected component of $a \in \K^{\RT_0}$ is 
  \begin{displaymath}
   a+ \K^{(\RT_0)} \coloneq \{b \in \K^{\RT_0} \mid b(\tau) - a(\tau) =0 \text{ for almost all } \tau \in  \RT_0\} ,
  \end{displaymath}
 i.e.\ it is the direct sum with the box topology (which is a locally convex space, cf.\ \cite{MR0130553}).
 
 Since $\K^{\RT_0}$ with the box topology is a disconnected topological space, it fails to be a topological vector space.\footnote{While addition is continuous, scalar multiplication fails to be continuous, cf.\ the discussion of the problem in \cite{MR0130553}.}  
 Hence with the box topology on $\K^{\RT_0}$ one can not use techniques from calculus on $\K^{\RT_0}$ (since the standard notions of infinite-dimensional calculus require at least an ambient topological vector space).
 In addition, the box topology even fails to turn the (complex) Butcher group into a topological group.

 \begin{lem}\label{lem: box:ntopgp}
  If we endow $\BGC$ with the box topology, then the group operations become discontinuous. 
  Thus $\BGC$ can not be a topological group, whence it can not be a Lie group.
  A similar assertion holds for $\BGp$.
 \end{lem}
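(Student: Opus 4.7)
The plan is to disprove continuity of the multiplication $m \colon \BGC \times \BGC \to \BGC$ at a single carefully chosen point. The obstruction exploits the fact that in a corolla $\gamma_n$ (a root with $n$ leaves), the ordered subtree $s = \{\text{root}\}$ produces, via \eqref{eq: BGP:mult}, a summand $b(\bullet)\,a(\bullet)^n$ in $m(a,b)(\gamma_n)$; if $a(\bullet)$ is any real number strictly greater than $1$, the factor $a(\bullet)^n$ amplifies arbitrarily small perturbations of $b$ at the coordinate $\bullet$ into arbitrarily large perturbations of the product at the coordinates $\gamma_n$, and this is precisely the phenomenon the box topology cannot control uniformly across all trees.

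Concretely, take $a_0 \in \BGp$ with $a_0(\bullet) = 2$ and $a_0(\tau) = 0$ for every other $\tau \in \RT$, and consider the box neighbourhood $U \coloneq \BN(a_0, \boldsymbol{\epsilon})$ where $\boldsymbol{\epsilon}(\tau) = 1$ for all $\tau$. Given any box neighbourhoods $V_1 = \BN(a_0, \boldsymbol{\delta}_1)$ of $a_0$ and $V_2 = \BN(e, \boldsymbol{\delta}_2)$ of $e$, pick $c$ with $0 < c < \boldsymbol{\delta}_2(\bullet)$ and let $b \in V_2$ be the element defined by $b(\bullet) = c$ and $b(\tau) = 0$ for $\tau \neq \bullet,\emptyset$; set $a \coloneq a_0 \in V_1$. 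Since $a_0$ and $b$ are supported on $\{\emptyset, \bullet\}$, the enumeration of $\OST(\gamma_n)$ as $\{\emptyset\} \cup \{\{\text{root}\} \cup L \mid L \subseteq \text{leaves}\}$ leaves only the summand corresponding to $L = \emptyset$ in \eqref{eq: BGP:mult}, yielding
\begin{displaymath}
 m(a_0, b)(\gamma_n) = b(\bullet)\,a_0(\bullet)^n = c\cdot 2^n,
\end{displaymath}
so that $|m(a_0, b)(\gamma_n) - a_0(\gamma_n)| = c\cdot 2^n > 1$ as soon as $n$ is large. Hence $m(a_0, b) \notin U$, proving that $m$ is not continuous at $(a_0, e)$. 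Since $a_0, b \in \BGp$, the same argument applies to the real Butcher group.

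For inversion, a parallel analysis of the formula \eqref{eq: BGP:invers} on $\gamma_n$ proceeds by noting that the product $\prod_\theta a(\theta)$ vanishes unless every factor equals $a(\bullet)$, which forces the partition $p$ to contain \emph{all} $n$ edges of $\gamma_n$. For $a$ defined by $a(\bullet) = 2 + h$ and $a(\tau) = 0$ for other $\tau \in \RT$ with small real $h$, this unique non-vanishing term gives $\iota(a)(\gamma_n) = (-1)^{n+1}(2+h)^{n+1}$, so $|\iota(a)(\gamma_n) - \iota(a_0)(\gamma_n)|$ grows to first order like $(n+1)\cdot 2^n |h|$ and cannot be kept below $1$ for every $n$ by any $h \neq 0$. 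The main bookkeeping obstacle is to verify that in both sums---over $\OST(\gamma_n)$ and over $\pP(\gamma_n)$---exactly one term survives, but this reduction is forced by the narrow support of $a_0$ and $b$ on $\{\emptyset, \bullet\}$ together with the observation that $\gamma_m \neq \bullet$ for $m \geq 1$.
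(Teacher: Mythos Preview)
Your argument is correct and takes a genuinely different route from the paper's. The paper proves discontinuity of \emph{inversion at the identity} $e$: it fixes the target box of width $\boldsymbol{\epsilon}(\tau)=\tfrac{1}{|\tau|!}$, and for any candidate neighbourhood $W$ of $e$ picks the element $a_\varepsilon$ supported on $\bullet$ with $a_\varepsilon(\bullet)=\varepsilon$; the explicit formula $a_\varepsilon^{-1}(\tau)=(-1)^{|\tau|-1}\varepsilon^{|\tau|}$ holds for \emph{every} tree, and the contradiction is $\varepsilon^{|\tau|}<\tfrac{1}{|\tau|!}$ failing for large $|\tau|$. Your proof instead shows discontinuity of \emph{multiplication at a non-identity point} $(a_0,e)$ with $a_0(\bullet)=2$, tests only against corollas $\gamma_n$, and uses a target box of constant width~$1$; the blowup is the transparent geometric factor $c\cdot 2^n$.

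What each approach buys: the paper's version is canonical in that it detects the failure already at the identity, and its factorial-shrinking box pinpoints exactly how badly the box topology loses control (any positive $\varepsilon$ beats $\tfrac{1}{|\tau|!}$ eventually). Your version is more elementary---no shrinking widths, one explicit family of trees, and the amplification mechanism $a_0(\bullet)^n$ is visible at a glance---at the modest cost of working away from $e$. Your inversion sketch is likewise correct (the unique surviving partition on $\gamma_n$ is indeed ``all edges'', with skeleton $\gamma_n$ of order $n+1$), though note that the paper gets by with inversion alone, whereas you lead with multiplication; either suffices for the lemma.
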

 
 \begin{proof}
  Let $e$ be the unit element of $\BGC$ and define $\boldsymbol{\epsilon} (\tau) \coloneq \tfrac{1}{|\tau|!}$ for $\tau \in \RT_0$. 
  Consider the box $\BN (e, \boldsymbol{\epsilon}) \subseteq \C^{\RT_0}$.
  Then $U \coloneq \BN (e, \boldsymbol{\epsilon}) \cap \BGC$ is an open neighbourhood of $e$ in $\BGC$.
  By construction $a \in U$ if and only if $|a (\tau)| < \frac{1}{|\tau|!}$ for all $\tau \in \RT$.
  We will prove now that there is no open $e$-neighbourhood $W \subseteq \BGC$ with $\iota (W) \subseteq U$, i.e.\ $\iota$ must be discontinuous at $e$.
  To see this we argue indirectly and assume that there is an open set $W$ with this property.
  Since $W$ is open, there is a box neighbourhood of $e$ contained in $W$. Thus we find $\ve > 0$ such that the map 
   \begin{displaymath}
    a_\ve \colon \RT_0 \rightarrow \C, a_\ve(\emptyset) = 1, a_\ve(\tau ) = \begin{cases}
                                                                     \ve & \text{if } \tau = \bullet \text{ (the one node tree)}\\
                                                                     0 & \text{else }
                                                                    \end{cases}
   \end{displaymath}
  is contained in $W$. Now by \eqref{eq: BGP:invers} we see that the inverse of $a_\ve$ satisfies 
   \begin{displaymath}
     a_\ve^{-1} (\tau) = \sum_{p \in \pP (\tau)} (-1)^{|p_\tau|} a_\ve(\tau \setminus p) = (-1)^{|\tau|-1} \underbrace{a_\ve(\bullet) a_\ve (\bullet) \cdots a_\ve(\bullet)}_{|\tau|\text{-times}} = (-1)^{|\tau|-1} \ve^{|\tau|}
   \end{displaymath}
  Hence $|\ev_\tau \circ \iota (a_\ve)| = \ve^{|\tau|}$ holds for all $\tau \in \RT_0$. 
  On the other hand $\iota (a_\ve) \in \iota (W) \subseteq U$ and thus we must have $\ve^{|\tau|} = |\ev_\tau \circ \iota (a)| < \tfrac{1}{|\tau|!}$ for all $\tau \in \RT_0$.
  We obtain a contradiction, whence $\iota$ can not be continuous in $e$.
  A similar argument shows that the multiplication can not be continuous in $(e,e)$.
  Thus $\BGC$ with the box topology can not be a topological group.
 \end{proof}


\section{The Lie algebra of the Butcher group}\label{sect: BG-LAlg}

In this section, the Lie algebra $\Lf(\BGC)$ of the complex Butcher group will be determined.
Note that the Lie bracket will be a continuous bilinear map on $\Lf (\BGC) = T_e \BGC$ (the tangent space at the identity) and thus $\Lf (\BGC)$ will be a topological Lie algebra.
The Lie bracket on $T_e \BGC$ is induced by the Lie bracket left invariant vector fields and we want to avoid computing their Lie bracket. 
This is possible by a classical argument by Milnor who computes the Lie algebra via the adjoint action of the group on its tangent space (see \cite[pp.\ 1035-1036]{milnor1983}).

To simplify the computation recall from Lemma \ref{lem: tspace} that the tangent space $T_e\BGC$ is simply the model space $\C^{\RT}$.
To distinguish elements in the model space from elements in $\BGC$ we will from now on always write $\an , \bn ,\cn , \ldots$ for elements in $\C^{\RT} \subseteq \C^{\RT_0}$. 
Pull back the multiplication (via the translation by $-e$) to a holomorphic map on the model space.
This map $\an \ast \bn \in \C^{\RT}$ is given by the formula
    \begin{displaymath}
     \an \ast \bn (\tau) = \sum_{s \in \OST (\tau)} (\bn +e) (s_\tau) (\an +e) (\tau \setminus s), \quad \text{ for }\tau \in \RT.
    \end{displaymath}
By construction, we derive for the zero-map $\zero \in \C^{\RT}$ the identities $\an \ast\, \zero = \an = \zero \ast \an$. 
Hence the constant term of the Taylor series of $\ast$ in $(\zero , \zero)$ (cf.\ \cite[Proposition 1.17]{hg2002a}) vanishes. 
Following \cite[Example II.1.8]{neeb2006}, the Taylor series is given as   
\begin{displaymath}
   \an \ast \bn =  \an + \bn + B( \an ,  \bn ) + \cdots.
  \end{displaymath}
Here $B( \an , \bn) = \left.\frac{\partial^2}{\partial r \partial t}\right|_{t,r=0} \hspace{-0.25cm} (t\an \ast r \bn)$ is a continuous $\C^{\RT}$-valued bilinear map and the dots stand for terms of higher degree. 
With arguments as in \cite[p.\ 1036]{milnor1983}, the adjoint action of $T_{e} \BGC =\C^{\RT} \subseteq \RT_0$ on itself is given by 
  \begin{displaymath}
   \text{ad} (\an) \bn = B( \an , \bn)  - B(  \bn, \an).
  \end{displaymath}
In other words, the skew-symmetric part of the bilinear map $B$ defines the adjoint action.

By \cite[Assertion 5.5]{milnor1983} (or \cite[Example II.3.9]{neeb2006}), the Lie bracket is given by $\LB[\an ,\bn ] = \text{ad}(\an ) \bn$.
To compute the bracket $\LB$, it is thus sufficient to compute the second derivative of $\ast$ in $(\zero ,\zero)$. 

\begin{setup}\label{setup: LBcomp}
 Fix $\an , \bn \in \C^{\RT}$ and compute $B( \an , \bn) = \left.\frac{\partial^2}{\partial r \partial t}\right|_{t,r=0} \hspace{-0.25cm} (t\an \ast r \bn)$. 
 Since $(t\an \ast r \bn)$ takes its values in $\C^{\RT}$, we can compute the derivatives componentwise, 
 i.e.\ for each $\tau \in \RT$ we have $\ev_\tau (B( \an , \bn)) = \left.\frac{\partial^2}{\partial r \partial t}\right|_{t,r=0} \hspace{-0.25cm} \ev_\tau  (t\an \ast r \bn)$.
 Example \ref{ex: deriv: linear} and the chain rule imply for any smooth map $f \colon \R \rightarrow \BGC$
    \begin{displaymath}
     \frac{\partial}{\partial r} (\ev_{\tau} \circ f) = d \ev_{\tau} \left(f; \frac{\partial}{\partial r} f\right) = \ev_{\tau} \circ \frac{\partial}{\partial r}f 
    \end{displaymath}
 since $\ev_\tau$ is continuous and linear for each $\tau \in \RT$.    
 Now fix $\tau \in \RT$ and use the formula \eqref{eq: locmult} to obtain a formula for the derivative. 
  \begin{align}
   \left.\frac{\partial^2}{\partial r \partial t}\right|_{t,r=0} \hspace{-0.25cm} \ev_\tau (t\an \ast r \bn) 
	  &=  \left.\frac{\partial^2}{\partial r \partial t}\right|_{t,r=0} \sum_{s \in \OST (\tau)} (r\bn +e)(s_{\tau}) \prod_{\theta \in \tau \setminus s} (t\an+e) (\theta) \notag \\
	  &= \sum_{s \in \OST (\tau)} \left.\frac{\partial}{\partial r}\right|_{r=0} \ev_{s_{\tau}} (r\bn+e) \left.\frac{\partial}{\partial t}\right|_{t=0} \prod_{\theta \in \tau \setminus r} (t\an+e) (\theta) \notag \\
	  &= \sum_{s \in \OST (\tau)} \ev_{s_{\tau}} \left(\left.\frac{\partial}{\partial r}\right|_{r=0} r\bn+e\right) \left.\frac{\partial}{\partial t}\right|_{t=0} \prod_{\theta \in \tau \setminus s} (t\an+e) (\theta) \notag\\
	  &= \sum_{s \in \OST (\tau), s \neq \emptyset} \ev_{s_\tau} (\bn) \left.\frac{\partial}{\partial t}\right|_{t=0} \prod_{\theta \in \tau \setminus s} (t\an+e) (\theta)\label{eq: preLB}
  \end{align}
  To compute the remaining derivative, we use the Leibniz-formula and pull the derivative into the argument of the $\ev_\theta$. 
  Hence the product in \eqref{eq: preLB} becomes: 
  \begin{displaymath}
   \left.\frac{\partial}{\partial t}\right|_{t=0} \prod_{\theta \in \tau \setminus s} \ev_\theta (t\an + e) 
			= \sum_{\theta \in \tau \setminus s} \ev_\theta (\an) \prod_{\gamma \in (\tau \setminus s) \setminus \{\theta\}} \ev_\gamma (\zero + e) (\theta).
  \end{displaymath}
 As $\zero +e = e$, each product (and thus each summand) such that there is a tree $\gamma \in (\tau \setminus s) \setminus \set{\theta , \emptyset}$ vanishes. 
 Hence, if the sum is non-zero it contains exactly one summand, i.e.\ $\tau \setminus s$ must be a tree. 
 Moreover, the derivative will only be non-zero if $\tau \setminus s$ is not the empty tree. 
 Otherwise,the first factor $\ev_\theta(\an)$ vanishes. 
 Before we insert these informations in \eqref{eq: preLB} to obtain a formula for $B(\an,\bn)$ let us fix some notation.
 \end{setup}
 
 \begin{nota}
  Let $\tau \in \RT_0$ be a rooted tree. 
  We define the \emph{set of all splittings} as 
    \begin{displaymath}
     \SP{\tau} \coloneq \setm{s \in \OST (\tau)}{\tau \setminus s \text{ consists of only one element}}
    \end{displaymath}
  Furthermore, define the set of \emph{non-trivial splittings}
  $\SP{\tau}_1 \coloneq \setm{\theta \in \SP{\tau}}{\theta \neq \emptyset, \tau}$.
  Observe that for each tree $\tau$ the order of trees in $\SP{\tau}_1$ is strictly less than $|\tau|$. 
  Thus for the tree $\bullet$ with exactly one node $\SP{\bullet}_1 = \emptyset$.
 \end{nota}

\begin{setup}\label{setup: Bkcomp}
 With the notation in place, we can finally insert the information obtained in \ref{setup: LBcomp} into \eqref{eq: preLB} to obtain the following formula for the $\tau$th component of $B(\an,\bn)$: 
  \begin{displaymath}
   \ev_\tau B(\an, \bn) = \sum_{s \in \SP{\tau}_1} \bn(s_{\tau}) \an (\tau \setminus s)
  \end{displaymath}
\end{setup}

\begin{thm}\label{thm: Butcher:LA}
  The Lie algebra of the complex Butcher group is $(\C^{\RT}, \LB)$, where the Lie bracket $\LB[\an , \bn]$ for $\an ,\bn \in \C^{\RT}$ is given for $\tau \in \RT$ by
  \begin{equation}\label{eq: LB}
   \LB[\an,\bn](\tau) = \sum_{s \in \SP{\tau}_1}\left(\bn(s_\tau) \an (\tau \setminus s) - \bn(\tau \setminus s) \an (s_\tau)\right).
  \end{equation}
  Note that by \eqref{eq: LB} $\LB$ restricts to a Lie bracket on $\Lf (\BGp) \cong(\R\times\set{0})^{\RT} \subseteq \Lf(\BGC)$.
 
 The Lie algebra of the Butcher group is $(\R^{\RT} , \LB)$, with the bracket induced by $\Lf(\BGC)$ on the subspace $\R^{\RT} \cong (\R\times\set{0})^{\RT}$. 
\end{thm}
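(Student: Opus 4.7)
The plan is to combine the componentwise formula for $B(\an,\bn)$ derived in \ref{setup: Bkcomp} with the general identity $\LB[\an,\bn]=\ad(\an)\bn = B(\an,\bn)-B(\bn,\an)$ recalled (following Milnor) immediately before the theorem statement. First I would substitute the expression
\[
\ev_\tau B(\an,\bn) \;=\; \sum_{s \in \SP{\tau}_1} \bn(s_{\tau})\,\an(\tau\setminus s)
\]
into the skew-symmetrization, tree by tree, to read off the claimed formula \eqref{eq: LB} for $\ev_\tau\LB[\an,\bn]$. This step is a one-line bookkeeping computation and carries no real content beyond the work already done.

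Before concluding that this genuinely yields the Lie algebra, I would verify that \eqref{eq: LB} defines a continuous bilinear map $\C^{\RT}\times\C^{\RT}\to\C^{\RT}$ (so that it really is a topological Lie bracket on $T_e \BGC$). Bilinearity is evident from the formula. For continuity, recall that $\C^{\RT}$ carries the product topology (\ref{setup: affine}), so by the componentwise criterion of Lemma \ref{lem: bgp:prod}(c) it suffices to check that each $\ev_\tau\LB[\,\cdot\,,\,\cdot\,]$ is continuous. For each fixed $\tau$ the set $\SP{\tau}_1$ is finite, so the right hand side is a finite sum of products of the continuous linear evaluation functionals $\ev_{s_\tau}$ and $\ev_{\tau\setminus s}$ (cf.\ Example \ref{ex: deriv: linear}); hence continuity (and in fact smoothness) follows.

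The real statement then follows from the realisation of $\BGp$ as a real analytic Lie subgroup of $\BGC$ established in the proof of Theorem \ref{thm: Butcher:Lie}, under which $\Lf(\BGp)$ corresponds to $(\R\times\set{0})^{\RT}\subseteq\C^{\RT}$. Since every summand on the right hand side of \eqref{eq: LB} is a real linear combination of products of values of $\an$ and $\bn$, the bracket preserves the real subspace, and its restriction is exactly the Lie bracket of the real Lie subgroup $\BGp$.

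The main obstacle — the explicit componentwise differentiation of $\ast$ at $(\zero,\zero)$ — has already been handled in paragraphs \ref{setup: LBcomp}--\ref{setup: Bkcomp}; after that, the proof is essentially a clean substitution plus the routine continuity check and the real/complex compatibility observation. The one subtlety worth being careful about is the reduction in \ref{setup: LBcomp} to splittings with $\tau\setminus s$ a single (non-empty) tree, which is what forces the sum to range over $\SP{\tau}_1$ rather than all of $\OST(\tau)$; this is the only place where the combinatorial structure of rooted trees enters in a non-formal way.
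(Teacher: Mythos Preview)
Your proposal is correct and follows essentially the same approach as the paper: both derive \eqref{eq: LB} by skew-symmetrising the formula for $B(\an,\bn)$ obtained in \ref{setup: LBcomp}--\ref{setup: Bkcomp} via Milnor's adjoint-action argument, and both deduce the real case from the inclusion $\BGp \subseteq \BGC$ established in Theorem \ref{thm: Butcher:Lie}. The only difference is that you include an explicit continuity check for $\LB$, which the paper omits (the continuity being automatic once one knows $\LB$ arises as the Lie bracket of a Lie group); this is a harmless and arguably welcome addition.
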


\begin{proof}
 Clearly \eqref{eq: LB} follows directly from the computation in \ref{setup: LBcomp} and \ref{setup: Bkcomp} and the formula for the Lie bracket via the adjoint action on $T_e \BGC$. 
 
 From \eqref{eq: LB} we see that $\LB$ restricts to the subspace $(\R\times\set{0})^{\RT}$. 
 In Theorem \ref{thm: Butcher:Lie} we have seen that the Butcher group is contained as a real analytic subgroup of $\BGC$.
 In particular, we see that $T_e \BGp \cong (\R\times\set{0})^{\RT}$. 
 Clearly the calculation in \ref{setup: LBcomp} restrict to $(\R\times\set{0})^{\RT}$ and yield a Lie bracket for $\BGp$ on $T_e \BGp$.
\end{proof}

The Lie algebra of the Butcher group is not the only Lie algebra closely connected to the Butcher group. 
To explain this connection we briefly recall some classical results by Connes and Kreimer: 

\begin{rem}\label{remark: CKLie}
 In \cite{CK98} Connes and Kreimer consider a Hopf algebra $\hH$ of rooted trees.
 The algebra $\hH$ is the $\R$-algebra\footnote{In \cite{CK98} the authors work over the field $\Q$ of rational numbers. 
 However, by applying $\cdot \otimes_{\Q} \R$ to the $\Q$-algebras the same result holds for the field $\R$ (cf.\ \cite[p.\ 41]{CK98}). 
 The thesis of Mencattini, \cite{mencattini} contains an explicit computation for $\R$ and $\C$.}
 of polynomials on $\RT_0$ together with the coproduct 
 \begin{displaymath}
  \Delta \colon \hH \rightarrow \hH \otimes \hH , \tau \mapsto \sum_{s \in \OST (\tau)} (\tau \setminus s) \otimes s_\tau
 \end{displaymath}
and the antipode $S (\tau) \coloneq \sum_{p \in \pP (\tau)} (-1)^{|p_\tau|} (\tau \setminus p)$. 

As observed by Brouder (cf.\ \cite{Brouder-04-BIT}), the coproduct and antipode are closely related to the product and inversion in the Butcher group.
Indeed the Butcher group corresponds to the group of $\R$-valued characters of the Hopf algebra $\hH$ (see \cite[5.1]{CHV2010}).

In \cite[Theorem 3]{CK98} Connes and Kreimer constructed a Lie algebra $L_{CK}$ such that $\hH$ is the dual of the universal enveloping algebra of $L_{CK}$. 
From \cite[Eq. (99)]{CK98} we deduce that the Lie algebra $L_{CK}$ is given by the vector space $\bigoplus_{\tau \in \RT} \R$ with a suitable Lie bracket $\beta$.
Identifying $\bigoplus_{\tau \in \RT_0} \K \subseteq \prod_{\tau \in \RT_0} \K \rightarrow \K^{\RT_0}$ (which is the Lie algebra of the (complex) Butcher group) as in \ref{setup: affine}, the Lie bracket $\beta$ coincides with $\LB$ from Theorem \ref{thm: Butcher:LA} on the image of $\bigoplus_{\tau \in \RT_0} \K$.  
We conclude that the Lie algebra $\Lf(\BGp)$ of the Butcher group contains the Connes-Kreimer Lie algebra $L_{CK}$ as a subalgebra. 
Moreover, the subalgebra $L_{CK}$ is a dense subset of $\Lf(\BGp)$.
Thus we can identify the Lie algebra of the Butcher group with the completion of $L_{CK}$ as a topological vector space.
Note that this is the precise meaning of the term 'natural topological completion' used in \cite[p. 4]{LT07}. 
In loc.cit.\ the Connes-Kreimer algebra is discussed only in algebraic terms, whence the above observations put the remark into the proper topological context.
\end{rem}

Let us record a useful consequence of the computations in this section. 
Arguing as in \eqref{eq: preLB} we can derive a formula for the tangent mapping of the right-translation.

\begin{lem}\label{lem: deriv:trans}
 Fix $b \in \BGC$ and denote by $\rho_{b} \colon \BGC \to \BGC , x \mapsto x \cdot b$ the right translation. 
 For any $\an \in \C^{\RT} = T_e \BGC$ and $\tau \in \RT$, we then obtain the formula 
 \begin{equation}\label{eq: d:trans}
 \begin{aligned}
  \ev_\tau T_e \rho_{b} (\an) & =\an(\tau) + \sum_{s \in \SP{\tau}_1} b (s_{\tau}) \an (\tau \setminus s).
  \end{aligned}
 \end{equation}
\end{lem}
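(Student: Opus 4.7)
The plan is to reuse the tangent-space identification from Lemma \ref{lem: tspace} and redo, with only one curve parameter, the componentwise computation that led to \eqref{eq: preLB}. Concretely, by Lemma \ref{lem: tspace} we have $T_e \rho_b(\an) = \left.\tfrac{\partial}{\partial t}\right|_{t=0} (e + t\an) \cdot b$, and since each $\ev_\tau$ is continuous linear (Lemma \ref{lem: bgp:prod}), the derivative can be pulled inside $\ev_\tau$. Hence it suffices to compute $\left.\tfrac{\partial}{\partial t}\right|_{t=0} \ev_\tau((e+t\an)\cdot b)$ for each $\tau \in \RT$.

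First, apply the multiplication formula \eqref{eq: BGP:mult} to expand
\[
\ev_\tau((e+t\an)\cdot b) \;=\; \sum_{s \in \OST(\tau)} b(s_\tau)\prod_{\theta \in \tau \setminus s}(e+t\an)(\theta).
\]
The factor $b(s_\tau)$ is independent of $t$, so only the product has to be differentiated. By Leibniz,
\[
\left.\tfrac{\partial}{\partial t}\right|_{t=0} \prod_{\theta \in \tau \setminus s}(e+t\an)(\theta) \;=\; \sum_{\theta \in \tau \setminus s} \an(\theta)\prod_{\gamma \in (\tau\setminus s)\setminus\{\theta\}} e(\gamma).
\]

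Second, invoke the same combinatorial simplification as in \ref{setup: Bkcomp}: since $e(\gamma)=0$ for every $\gamma \in \RT$ and every element of $\tau \setminus s$ is a non-empty rooted tree, the above inner product vanishes unless $(\tau\setminus s)\setminus\{\theta\}=\emptyset$, i.e.\ unless $\tau\setminus s$ is a single rooted tree. This is precisely the condition $s \in \SP{\tau}$, and in that case the surviving summand equals $b(s_\tau)\an(\tau\setminus s)$.

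Finally, split the surviving sum over $\SP{\tau}$ into three cases. For $s=\emptyset$ one has $b(\emptyset)=1$ and $\tau\setminus s=\tau$, contributing $\an(\tau)$. For $s=\tau$, the set $\tau\setminus s$ is empty, so the $\theta$-sum is empty and the contribution vanishes. For $s \in \SP{\tau}_1=\SP{\tau}\setminus\{\emptyset,\tau\}$, each summand contributes $b(s_\tau)\an(\tau\setminus s)$. Combining yields \eqref{eq: d:trans}. There is no real obstacle here; the only subtlety is correctly identifying and separating off the two trivial splittings, and observing that this computation is essentially the ``one-variable shadow'' of the Lie-bracket computation in \ref{setup: LBcomp}.
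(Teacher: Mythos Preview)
Your proof is correct and follows essentially the same route as the paper: invoke Lemma~\ref{lem: tspace} to write $T_e\rho_b(\an)$ as the $t$-derivative of $(e+t\an)\cdot b$, expand via \eqref{eq: BGP:mult}, apply Leibniz, and use $e(\gamma)=0$ for $\gamma\in\RT$ to reduce to splittings---exactly the ``one-variable shadow'' of \ref{setup: LBcomp} you describe. Your write-up is in fact more explicit than the paper's; note that the paper's displayed line reads $(b+t\an)\cdot b$, which is a typo for $(e+t\an)\cdot b$ (the simplification to $\SP{\tau}_1$ only works because the base point is $e$), so your version is the intended computation.
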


\begin{proof}
 Computing as in \eqref{eq: preLB} we obtain with Lemma \ref{lem: tspace} the desired formula 
  \begin{align*}
  \ev_\tau T_e \rho_b (\an)&= \left.\frac{\partial}{\partial t} \right|_{t=0} (b + t\an) \cdot b (\tau) = \left.\frac{\partial}{\partial t} \right|_{t=0} \sum_{s \in \OST(\tau)} b(s_\tau) (b+t\an)(\tau \setminus s) \\
		  &= \an (\tau) + \sum_{s \in \SP{\tau}_1} b(s_{\tau}) \an (\tau \setminus s). \qedhere
  \end{align*}
\end{proof}

\section{Regularity properties of the Butcher group}\label{sect: regularity}

 Finally we discuss regularity properties of the Lie group $\BGC$ and the Butcher group.
 Since we also want to establish regularity properties for the complex Lie group $\BGC$ several comments are needed: 
 Recall that holomorphicmaps are smooth with respect to the underlying real 
 structure\footnote{This follows from  \cite[Remark 2.12 and Lemma 2.5]{hg2002a} for manifolds modelled on Fr\'{e}chet spaces.},
 whence $\BGC$ carries the structure of a real Lie group.
 Now the complex Lie group $\BGC$ is called regular, if the underlying real Lie group is regular. 
 Thus for this section we fix the following convention. 
 \medskip

 \textit{Unless stated explicitly otherwise, all complex vector spaces in this section are to be understood as the underlying real locally convex vector spaces. 
  Moreover, differentiability of maps is understood to be differentiability with respect to the field $\R$.}

\begin{setup}\label{setup: diffeq}
Define the mapping 
  \begin{displaymath}
   f \colon [0,1] \times \BGC \times C^0 ([0,1] , \Lf (\BGC)) \rightarrow \C^{\RT_0}, (t,a, \eta) \mapsto T_e \rho_{a} (\eta(t)). 
   \end{displaymath}
 Recall that $f$ describes the right-hand side of the differential equation for regularity of $\BGC$ (where we have again identified $\ BGC$ with an affine subspace of $\C^{\RT_0}$).
  Moreover, \eqref{eq: d:trans} yields the formula 
  \begin{displaymath}
    f(t,a,\eta) (\tau) = \eta (t)(\tau) + \sum_{s \in \SP{\tau}_1} a (s_{\tau}) \eta (t) (\tau \setminus s)
  \end{displaymath}
\end{setup}

Let us first solve the differential equations for regularity with fixed parameters.

\begin{prop}\label{prop: solution}
Fix a continuous curve $\an \colon [0,1] \to \Lf (\BGC)$ and let $f$ be defined as in \ref{setup: diffeq}.
Then the differential equation 
  \begin{equation}\label{eq: ODE:regular}
  \begin{cases}
   \gamma'(t)&= T_e \rho_{\gamma (t)} (\an (t)) = f(t, \gamma (t) , \an)\\ 
   \gamma(0) &= e
  \end{cases}
 \end{equation}
 on $\BGC$ admits a unique solution on $[0,1]$.
\end{prop}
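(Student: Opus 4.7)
My plan is to solve \eqref{eq: ODE:regular} componentwise, exploiting the crucial structural feature recorded after the definition of $\SP{\tau}_1$: for every non-trivial splitting $s\in\SP{\tau}_1$ both $s_\tau$ and $\tau\setminus s$ are trees of order \emph{strictly smaller} than $|\tau|$. This turns the system into a lower-triangular cascade with respect to the grading by $|\tau|$ and bypasses the need for any Picard/fixed-point argument.

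First I translate the equation to the model space. By Lemma \ref{lem: bgp:prod}(c) a curve $\gamma\colon[0,1]\to\BGC\subseteq\C^{\RT_0}$ is $C^1$ if and only if $\ev_\tau\circ\gamma$ is $C^1$ for every $\tau\in\RT$, and the $\emptyset$-component is constantly equal to $1$ because $\gamma$ takes values in $\BGC$. Using the explicit formula from \ref{setup: diffeq} for $f$, the $\tau$-component of \eqref{eq: ODE:regular} reads
\begin{displaymath}
  (\ev_\tau\circ\gamma)'(t) \;=\; \an(t)(\tau) + \sum_{s\in \SP{\tau}_1} (\ev_{s_\tau}\circ\gamma)(t)\cdot \an(t)(\tau\setminus s), \qquad (\ev_\tau\circ\gamma)(0)=0.
\end{displaymath}

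Next I argue by induction on $n=|\tau|$. In the base case $\tau=\bullet$ the set $\SP{\bullet}_1$ is empty, so the equation reduces to $(\ev_\bullet\circ\gamma)'(t)=\an(t)(\bullet)$ with the unique $C^1$-solution $t\mapsto\int_0^t \an(r)(\bullet)\,dr$. For the inductive step, assume that for every tree $\sigma$ of order less than $n$ the component $\ev_\sigma\circ\gamma$ has already been uniquely determined as a $C^1$-function on $[0,1]$. For $|\tau|=n$ each term in the sum on the right-hand side involves only components indexed by trees of order $<n$, so the right-hand side becomes a known continuous function $h_\tau\colon[0,1]\to\C$. The linear initial value problem for $\ev_\tau\circ\gamma$ then decouples and is solved uniquely by $t\mapsto\int_0^t h_\tau(r)\,dr$.

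Gluing the components back together via Lemma \ref{lem: bgp:prod}(c) yields a (necessarily unique) $C^1$-curve $\gamma\colon[0,1]\to\BGC$ satisfying \eqref{eq: ODE:regular}. I do not expect a serious obstacle here, since the triangular cascade replaces the usual Fr\'{e}chet-space difficulties (no norms controlling all of $\C^{\RT_0}$ simultaneously, no Banach fixed-point theorem applicable directly) by a sequence of one-dimensional, genuinely linear integrations at each fixed tree. The only care required is bookkeeping the identification of $\BGC$ with the affine subspace $e+\C^{\RT}$ and ensuring that the componentwise solutions do assemble into a map into $\BGC$, which is immediate from $(\ev_\tau\circ\gamma)(0)=0$ for $\tau\in\RT$ and the convention $\ev_\emptyset\circ\gamma\equiv 1$.
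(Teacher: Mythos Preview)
Your proof is correct and follows essentially the same strategy as the paper: both exploit that the $\tau$-component of \eqref{eq: ODE:regular} involves only $\gamma$-components indexed by trees of strictly smaller order, so the system is lower-triangular with respect to the grading by $|\tau|$ and can be solved iteratively by one-dimensional integrations. The only cosmetic difference is that the paper fixes an enumeration of $\RT$ compatible with the grading and phrases the argument as a strictly lower-diagonal countable linear system (citing Deimling for the general scheme), whereas you run the induction on $|\tau|$ directly and write out the explicit antiderivatives; your version is arguably more self-contained, while the paper's enumeration is reused later to set up the finite-dimensional truncations in the regularity proof.
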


\begin{proof}
 From \ref{setup: diffeq} we deduce that the first line of \eqref{eq: ODE:regular} can be rewritten for a tree $\tau$ as
 \begin{equation} \label{eq: reg:ode}
 \gamma'(t) (\tau) = \ev_{\tau}(\gamma' (t)) = \ev_{\tau} (\an (t)) +  \sum_{s \in \SP{\tau}_1} \ev_{s_{\tau}}(\gamma (t)) \an (t)(\tau \setminus s)
 \end{equation}
 As $\ev_{s_{\tau}}$ is continuous and linear for each $s \in \SP{\tau}_1$, each summand in the sum in \eqref{eq: reg:ode} is linear in $\gamma$.
 Note that for any fixed rooted tree $\tau$, the number of nodes for trees in $\SP{\tau}_1$ is strictly less than $|\tau|$. 
 Choose an enumeration $\tau_1, \tau_2, \dotsc$ of rooted trees which respects the number of nodes grading of the trees i.e.\ the enumeration satisfies:
 \begin{equation}
 \label{eq: ordering}
   \text{For all } l,k \in \N \text{ with } |\tau_k| < |\tau_l| \text{ we have } k < l.
 \end{equation}
 Using the enumeration, we rewrite the right hand side of \eqref{eq: reg:ode} as 
 \begin{equation}\label{eq: lower:diag}
  \ev_{\tau_k} (\gamma' (t)) = \left(\sum_{l < k} A_{kl} (t,\an) \ev_{\tau_l} (\gamma (t))\right) + \ev_{\tau_k} (\an (t)), \quad \quad k \in \N.
 \end{equation}
 Here the coefficient $A_{kl} (t,\an)$ for $l < k$ is a finite (possibly empty) sum of of terms of the form $\an (t)(\tau_k \setminus s)$ with $s_{\tau_k} = \tau_l$. 
 Since $\an \colon [0,1] \rightarrow \Lf (\BGC) \subseteq \C^{\RT_0}$ is continuous, we see that the $A_{kl}$ depend continuously on $t$.
  Following \cite[\S 6]{deimling77}, we interpret the differential equations \eqref{eq: reg:ode} as a system of differential equations.
 From \eqref{eq: lower:diag}, we deduce that this system is strictly lower diagonal, i.e.\ the right hand side of the $j$th component depends only on the first $j-1$ variables. 
 Furthermore, it is an inhomogenous linear system.
 The differential equation can be solved by adapting the argument in \cite[p. 79-80]{deimling77} as follows:
 
 Lower diagonal systems can be solved iteratively component by component, if each solution exists on a time intervall $[0,\ve]$ for some fixed $\ve >0$.
 The system is nonhomogenous linear, the solution at each iteration is unique and exists for all times $t \in [0,1]$.
 Therefore, the equation \eqref{eq: ODE:regular} admits a unique global solution which can be computed iteratively (more details on this are given in Remark \ref{rem: ctbl:sys} below).\qedhere

%
\end{proof}

\begin{defn}
 By Proposition \ref{prop: solution} we can define the flow-map associated to \eqref{eq: ODE:regular} via
  \begin{displaymath}
   \Fl^f \colon [0,1] \times C^0 ([0,1],\Lf (\BGC)) \rightarrow \BGC, (t,\an) \mapsto \gamma_{\an} (t)
  \end{displaymath}
 where $\gamma_{\an}$ is the unique solution to \eqref{eq: ODE:regular}. 
\end{defn}

 To prove regularity of the Butcher group, we will show that the flow-map $\Fl^f$ satisfies suitable differentiability properties.   
 Let us review the construction of $\gamma_{\an}$.

\begin{rem}\label{rem: ctbl:sys}
 Consider $f$ as in \ref{setup: diffeq} and fix $\an \in C^0([0,1],\Lf (\BGC))$. 
 Furthermore, choose an enumeration of $\RT$ which satisfies \eqref{eq: ordering}.
 
 We define 
 $g_{k,\an} \colon [0,1] \times \C^{k} \rightarrow \C , g_{k,\an} (t,x) = \ev_{\tau_k} \circ f(t,\hat{x},\an)$ for $k \in \N$,
 where $\hat{x} \in \C^{\RT_0}$ satisfies $\ev_{\tau_i}(\hat{x})= x_i$ for $i\le k$.
 Then \eqref{eq: lower:diag} shows that the functions $g_{k,\an}$ are well-defined and continuous.
 Fix $n\in \N$. The system of linear (inhomogeneous) initial value problems
  \begin{equation}\label{eq: init:n}
   x_i' = g_{i,\an} (t,x_1,\ldots, x_i) ,\ x_i (0) = 0 , \ i \leq n
  \end{equation}
 admits a solution on $[0,1]$.
 Recall from \cite[p. 78]{deimling77} the following facts on its solution. 
 If $(x_1^n ,\ldots, x_n^n)$ is a solution to \eqref{eq: init:n}, then we may solve 
  \begin{displaymath}
   y' = g_{n+1,\an} (t,x_1^n,x_2^n, \ldots, x_n^n,y) , \ y(0)=0
  \end{displaymath}
 on $[0,1]$. In particular, the map $(x_1^n,\ldots , x_n^n,y)$ solves the system \eqref{eq: init:n} for $n+1$.
 Continuing inductively, we obtain $\gamma_{\an}$ as the solution of \eqref{eq: ODE:regular}.  
\end{rem}

 The fact that solutions to \eqref{eq: ODE:regular} can be found inductively by solving finite-dimensional ODEs allows us to  discuss differentiability properties of the flow-map. 
 To this end we need a technical tool, the calculus of $C^{r,s}$-mappings, which we recall now from \cite{alas2012}.

\begin{defn}
 Let $H_1$, $H_2$ and $F$ be locally convex spaces, $U$ and $V$ open subsets of
 $H_1$ and $H_2$, respectively, and $r,s \in \N_0 \cup \{\infty\}$.
 \begin{enumerate}
  \item A mapping
 $f\colon U \times V \rightarrow F$ is called a $C^{r,s}$-map if for all
 $i,j \in \N_0$ such that $i \leq r, j \leq s$, the iterated directional
 derivative
 \begin{displaymath}
  d^{(i,j)}f(x,y,w_1,\dots,w_i,v_1,\dots,v_j) \coloneq (D_{(w_i,0)} \cdots
  D_{(w_1,0)}D_{(0,v_j)} \cdots D_{(0,v_1)}f ) (x,y)
 \end{displaymath}
 exists for all
 $ x \in U, y \in V, w_1, \ldots , w_i \in H_1,  v_1, \ldots ,v_j \in H_2$ and
 yields continuous maps
 \begin{align*} 
  d^{(i,j)}f\colon    U \times V \times H^i_1 \times H^j_2 &\rightarrow F,\\ 
  (x,y,w_1,\dots,w_i,v_1,\dots,v_j)&\mapsto (D_{(w_i,0)} \cdots D_{(w_1,0)}D_{(0,v_j)} \cdots D_{(0,v_1)}f ) (x,y).
 \end{align*}
 \item In (a) all spaces $H_1,H_2$ and $F$ were assumed to be modelled over the same $\K \in \{\R,\C\}$. By \cite[Remark 4.10]{alas2012} we can instead assume that $H_1$ is a locally convex space over $\R$ and $H_2,F$ are locally convex spaces over $\C$. 
 Then a map $f \colon U \rightarrow F$ is a called $C^{r,s}_{\R,\C}$-map if the iterated differentials $d^{(i,j)}f$ (as in (a)) exist for all $0 \leq i \leq r , 0 \leq j \leq s$ and are continuous.
 Note that here the derivatives in the first component are taken with respect to $\R$ and in the second component with respect to $\C$.
 \end{enumerate}
\end{defn}

\begin{rem} 
             One can extend the definition of $C^{r,s}$- and $C^{r,s}_{\R,\C}$-maps to obtain $C^{r,s}$- or $C^{r,s}_{\R,\C}$-mappings on a product $I \times V$, where $I$ is a closed interval and $V$ open.
  This works as in the case of $C^r$-maps (see Definition \ref{defn: non-open} or cf.\ \cite[Definition 3.2]{alas2012}). 
            For further results and details on the calculus of $C^{r,s}$-maps we refer to
\cite{alas2012}. 
\end{rem}

In the next proposition we will explicitly consider $C^0 ([0,1],\Lf(\BGC))$ as a locally convex vector space over $\C$.

\begin{prop}\label{prop: sm:flow}
The flow-map $\Fl^f  \colon [0,1] \times C^0 ([0,1],\Lf (\BGC)) \rightarrow \BGC$ is $C^{1,\infty}_{\R,\C}$.
\end{prop}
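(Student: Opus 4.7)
The plan is to reduce the statement to a collection of scalar claims indexed by the rooted trees and then to prove those scalar claims by induction along the grading \eqref{eq: ordering}. First I would establish the $C^{r,s}_{\R,\C}$-analogue of Lemma~\ref{lem: bgp:prod}(c): since $\C^{\RT_0}$ carries the product topology, a map into the affine subspace $\BGC \subseteq e + \C^{\RT}$ is $C^{1,\infty}_{\R,\C}$ if and only if its composition with $\ev_\tau$ is $C^{1,\infty}_{\R,\C}$ for every $\tau \in \RT$. This is a direct consequence of the characterisation of $C^{r,s}$-maps into locally convex products from \cite{alas2012}. It therefore suffices to prove that $\Phi_k(t,\an) \coloneq \ev_{\tau_k}(\Fl^f(t,\an))$ is $C^{1,\infty}_{\R,\C}$ for every $k \in \N$, where $\tau_1,\tau_2,\dotsc$ is an enumeration of $\RT$ satisfying \eqref{eq: ordering}.

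Next I would run an induction on $k$. The strict lower-triangularity observed in \eqref{eq: lower:diag} is the decisive structural fact: the right-hand side of the ODE for $\Phi_k$ involves only the $\Phi_l$ with $l < k$ together with the continuous complex-linear evaluations $\ev_\sigma(\an(\cdot))$. For the base case $\tau_1 = \bullet$ one has $\SP{\bullet}_1 = \emptyset$, so $\Phi_1(t,\an) = \int_0^t \ev_\bullet(\an(u))\,du$; the integrand is jointly continuous in $u$ and continuous complex linear in $\an$, hence $C^{0,\infty}_{\R,\C}$, and integration in the upper limit adds one degree of regularity in $t$ to give $C^{1,\infty}_{\R,\C}$. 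For the inductive step, integrating \eqref{eq: reg:ode} gives
\begin{equation*}
\Phi_k(t,\an) = \int_0^t \Bigl(\ev_{\tau_k}(\an(u)) + \sum_{s \in \SP{\tau_k}_1} \ev_{\tau_k \setminus s}(\an(u))\, \Phi_{l(s)}(u,\an)\Bigr)\,du,
\end{equation*}
where $l(s) < k$ denotes the index of $s_{\tau_k}$. The integrand is a finite sum of products of $C^{0,\infty}_{\R,\C}$ factors (the evaluations of $\an(u)$) and $C^{1,\infty}_{\R,\C}$ factors (the $\Phi_{l(s)}$ by inductive hypothesis), hence itself $C^{1,\infty}_{\R,\C}$; parameter integration in $u$ then yields a $C^{1,\infty}_{\R,\C}$ map in $(t,\an)$, completing the induction.

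The step I expect to be the main technical obstacle is the clean invocation of the $C^{r,s}_{\R,\C}$ calculus of \cite{alas2012} for two purposes: first, the componentwise characterisation needed in the reduction step; and second, the assertion that parameter-dependent integration sends a $C^{0,\infty}_{\R,\C}$ integrand to a $C^{1,\infty}_{\R,\C}$ family, together with the closure of the class $C^{1,\infty}_{\R,\C}$ under finite sums and products. Once those calculus lemmas are cited with precise references, the induction goes through without further friction and delivers the claimed regularity of $\Fl^f$.
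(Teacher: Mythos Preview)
Your approach is correct and takes a genuinely different route from the paper's. One small slip: in the inductive step you assert that the integrand is $C^{1,\infty}_{\R,\C}$, but since the evaluations $(u,\an)\mapsto\ev_\sigma(\an(u))$ are only $C^{0,\infty}_{\R,\C}$ (the curve $\an$ is merely continuous), a product with the $C^{1,\infty}_{\R,\C}$ factors $\Phi_{l(s)}$ is only $C^{0,\infty}_{\R,\C}$. This is harmless, because your parameter-integration step only needs the integrand to be $C^{0,\infty}_{\R,\C}$ to produce a $C^{1,\infty}_{\R,\C}$ output, exactly as in your base case.

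The paper proceeds differently: rather than solving component by component via explicit integrals and induction, it packages the first $d$ components of \eqref{eq: lower:diag} into a single finite-dimensional ODE on $\C^d$ with right-hand side $G_d\colon [0,1]\times\C^d\times C^0([0,1],\Lf(\BGC))\to\C^d$, checks that $G_d$ is $C^{0,\infty}_{\R,\C}$ (using the same evaluation-map regularity from \cite{alas2012} that you would use), and then invokes the parameter-dependent flow theorem \cite[Proposition~5.9]{alas2012} to conclude that the finite-dimensional flow $\Fl^{G_d}$ is $C^{1,\infty}_{\R,\C}$. Since $\PR_d\circ\Fl^f=\Fl^{G_d}$, the componentwise criterion \cite[Lemma~3.10]{alas2012} finishes. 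Your version trades this black-box flow theorem for a more elementary parameter-integral lemma plus an explicit induction; the paper's version avoids the hand-run induction but relies on the heavier ODE machinery. Both hinge on the same structural input, the strict lower-triangularity of \eqref{eq: lower:diag}, and both end with the same componentwise reduction.
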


\begin{proof}
 Consider first a related finite-dimensional problem and define for $d \in \N$
  \begin{align*}
   G_d \colon [0,1] \times \C^{d} \times C^0 ([0,1], &\Lf (\BGC)) \rightarrow \C^{d} ,\\
   G_d (t,(x_1,\ldots,x_d),\an) &\coloneq (g_{1,\an} (t,x_1),g_{2,\an} (t,x_1,x_2), \ldots , g_{d,\an} (t,x_1,\ldots,x_d))
  \end{align*}
 with $g_{k,\an}$ as in Remark \ref{rem: ctbl:sys}.
 We claim that $G_d$ is of class $C^{0,\infty}_{\R,\C}$ with respect to the splitting $[0,1] \times (\C^{d} \times C^0 ([0,1], \Lf (\BGC))$ for all $d \in \N$.
 If this is true, the proof can be completed as follows. 
 Note that for fixed $\an \in C^0([0,1], \Lf (\BGC))$ and $d \in \N$ we obtain an inhomogeneous linear initial value problem 
  \begin{equation}\label{eq: ODE:findim}
   x'(t) = G_d (t,x(t),\an) , x(0) = 0 \in \C^d
  \end{equation}
 on the finite dimensional vector space $\C^d$. 
 Hence, for each fixed $\an$ there is a global solution $x^d_{0,\an} \colon [0,1] \rightarrow \C^d$ of \eqref{eq: ODE:findim}.
 Define the flow associated to \eqref{eq: ODE:findim} via 
  \begin{displaymath}
   \Fl^{G_d} \colon [0,1] \times C^{0} ([0,1], E) \rightarrow \C^d , (t,\an) \mapsto x^d_{0,\an} (t).
  \end{displaymath}
 As the right-hand side of \eqref{eq: ODE:findim} is a $C^{0,\infty}_{\R,\C}$-map, \cite[Proposition 5.9]{alas2012} shows that $\Fl^{G_d}$ is a mapping of class $C^{1,\infty}_{\R,\C}$.
 Define $\Pr_d \colon \C^\RT \rightarrow \C^d, \Pr_d \coloneq (\ev_{\tau_1},\ev_{\tau_2},\ldots,\ev_{\tau_d})$ and conclude from Remark \ref{rem: ctbl:sys} 
  \begin{displaymath}
   (\ev_{\tau_1} \circ \Fl^f, \ev_{\tau_2} \circ \Fl^f , \ldots , \ev_{\tau_d} \circ \Fl^f) = \PR_d \circ \Fl^f = \Fl^{G_d}.
  \end{displaymath}
 Now $\Fl^{G_d}$ is a $C^{1,\infty}_{\R,\C}$ mapping, whence the components $\ev_{\tau_l} \circ \Fl^f$ for $1\leq l \leq d$ are of class $C^{1,\infty}_{\R,\C}$. 
 As $d \in \N$ was arbitrary, all components of $\Fl^f$ are of class $C^{1,\infty}_{\R,\C}$. 
 The space $\C^\RT$ carries the product topology and thus \cite[Lemma 3.10]{alas2012} shows that $\Fl^f$ is a $C^{1,\infty}_{\R,\C}$-map as desired. 
 \bigskip
 
 \textbf{Proof of the claim, $G_d$ is $C^{0,\infty}_{\R,\C}$ for all $d \in \N$.}
 By \cite[Lemma 3.10]{alas2012}, $G_d$ will be of class $C^{0,\infty}_{\R,\C}$ if each of its components  is a $C^{0,\infty}_{\R,\C}$-map. 
 Thus if $\pi_k \colon \C^d \rightarrow \C$ is the projection onto the $k$th component, we have to prove that $\pi_k \circ G_d$ is a  $C^{0,\infty}_{\R,\C}$-map. 
 From \eqref{eq: d:trans} (cf.\ \eqref{eq: lower:diag}) we derive 
  \begin{displaymath}
   \pi_k \circ G_d (t,(x_1,\ldots,x_d),\an) = g_{k,\an} (t,(x_1,\ldots ,x_d)) = \ev_{\tau_k}(\an (t))  + \sum_{l< k} A_{kl} (t,\an) x_l.
  \end{displaymath}
 Recall that by \cite[Proposition 3.20]{alas2012} the evaluation map 
  \begin{displaymath}
   \ev \colon [0,1]\times C^0 ([0,1] , \Lf (\BGC)) \rightarrow \Lf (\BGC) , (t,\an) \mapsto \an (t)
  \end{displaymath}
 is a $C^{0,\infty}_{\R,\C}$-map.
 Hence $\ev_{\tau_k} (\an (t)) = \ev_{\tau_k} \circ \ev (t,\an)$ is a map of class $C^{0,\infty}_{\R,\C}$ by the chain rule \cite[Lemma 3.18]{alas2012}.
 
 Now consider the other summands. 
 The chain rules for $C^{r,s}_{\R,\C}$-mappings \cite[Lemma 3.17 and Lemma 3.19]{alas2012} 
 show that $A_{kl} (t,\an) \cdot x_l$ will be of class $C^{0,\infty}_{\R,\C}$ with respect to the splitting $[0,1] \times (\C^d \times C^0 ([0,1], \Lf (\BGC))$  
 if $A_{kl} \colon [0,1] \times C^0([0,1] , \Lf (\BGC)) \rightarrow \C$ is a $C^{0,\infty}_{\R,\C}$-map.
 Recall from the proof of Proposition \ref{prop: solution} (a) that each of the maps $A_{kl}$ is a finite 
 (possibly empty) sum of terms of the form $\an(t)(\tau_k \setminus s)$ with $s_{\tau_k} = \tau_l$.
 As above, these maps are a composition of the form $\ev_{\tau} \circ \ev$ whence of class $C^{0,\infty}_{\R,\C}$.
 We conclude that the maps $A_{kl}$ are of class $C^{0,\infty}_{\R,\C}$. 
 Summing up, $G_d$ is a $C^{0,\infty}_{\R, \C}$-mapping with respect to the splitting $[0,1] \times (\C^d \times C^0 ([0,1] , \Lf (\BGC)))$. 
\end{proof}

\begin{thm}
 \begin{enumerate}
  \item The complex Butcher group $\BGC$ is $C^0$-regular and its evolution  
  $$\evol_{\BGC} \colon C^0 ([0,1], \Lf(\BGC)) \rightarrow \BGC$$ is even holomorphic. 
  \item The Butcher group $\BGp$ is $C^0$-regular and its evolution map 
  $$\evol_{\BGp} \colon C^0 ([0,1], \Lf(\BGp)) \rightarrow \BGp$$ is even real analytic. 
 \end{enumerate}
In particular, both groups are regular in the sense of Milnor. \label{thm: BGp:reg}
\end{thm}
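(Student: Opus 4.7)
The plan is to derive the theorem directly from the smoothness properties of the flow map established in Proposition~\ref{prop: sm:flow}, with the real case obtained by restriction through the complexification.

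For part (a), I would identify $\Evol_{\BGC}(\an)(t) = \Fl^f(t,\an)$ and $\evol_{\BGC}(\an) = \Fl^f(1,\an)$, where $\Fl^f$ is the flow of the regularity equation constructed in Proposition~\ref{prop: solution}. Since $\Fl^f\colon [0,1] \times C^0([0,1],\Lf(\BGC)) \to \BGC$ is of class $C^{1,\infty}_{\R,\C}$, partial evaluation at $t=1$ yields a $C^\infty_\C$ map on the complex locally convex space $C^0([0,1],\Lf(\BGC))$, which is holomorphic by Remark~\ref{rem: analytic}. For fixed $\an$, the curve $t \mapsto \Fl^f(t,\an)$ is $C^1$ by the same proposition, giving the required $C^1$-solution of the regularity problem. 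Hence $\BGC$ is $C^0$-regular, and Milnor-regularity follows from the implication that $C^0$-regularity entails $C^r$-regularity for every $r$.

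For part (b), the argument passes to the real form by restriction. Given $\an \in C^0([0,1], \Lf(\BGp))$, formula \eqref{eq: d:trans} shows that each component of $T_e \rho_{\gamma(t)}(\an(t))$ is a polynomial with integer coefficients in the real entries of $\an(t)$ and $\gamma(t)$. Thus the ODE preserves the real subspace, and uniqueness from Proposition~\ref{prop: solution} forces $\Evol_{\BGC}(\an)(t) \in \BGp$ whenever $\an$ is real-valued. Consequently $\evol_{\BGp}$ equals the restriction of $\evol_{\BGC}$ to the real locally convex subspace $C^0([0,1], \Lf(\BGp)) \subseteq C^0([0,1], \Lf(\BGC))$, which immediately yields $C^0$-regularity of $\BGp$ (the $C^1$-curve $\Evol_{\BGp}(\an)$ takes values in the real analytic submanifold $\BGp$).

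To promote restriction to real analyticity, I would verify that $C^0([0,1], \Lf(\BGC))$ is the complexification of $C^0([0,1], \Lf(\BGp))$. This is the expected extension of Lemma~\ref{lem: mspace:compl}: splitting a continuous $\C^{\RT}$-valued curve into its real and imaginary parts is a linear homeomorphism with the product topology used in Example~\ref{ex: funspace}. Once this identification is in place, the holomorphic map $\evol_{\BGC}$ serves as the required extension of $\evol_{\BGp}$ to an open neighbourhood of the real locus, so $\evol_{\BGp}$ is real analytic in the sense of the paper's definition. The main obstacle is precisely this last identification of function-space complexifications; although routine, one has to match the topology of uniform convergence on $C^0([0,1],\Lf(\BGC))$ with the product topology induced on the complexification of $C^0([0,1], \Lf(\BGp))$, which requires careful bookkeeping with the defining seminorms.
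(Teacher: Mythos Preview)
Your argument for part~(a) is essentially identical to the paper's: both identify $\evol_{\BGC}=\Fl^f(1,\cdot)$ and read off holomorphicity and the $C^1$-property of the solution curves directly from the $C^{1,\infty}_{\R,\C}$-regularity of $\Fl^f$ established in Proposition~\ref{prop: sm:flow}.

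For part~(b) the paper takes a shorter route: it simply invokes \cite[Corollary~9.10]{1208.0715v3}, a general result asserting that if a complex Lie group is $C^0$-regular with holomorphic evolution and arises as the complexification of a real analytic Lie group, then the real form is $C^0$-regular with real analytic evolution. Your approach instead unpacks this abstract statement in the concrete situation at hand: you verify explicitly that the flow preserves the real form via the polynomial formula~\eqref{eq: d:trans}, and then argue real analyticity of $\evol_{\BGp}$ by identifying $C^0([0,1],\Lf(\BGC))$ as the complexification of $C^0([0,1],\Lf(\BGp))$ so that $\evol_{\BGC}$ itself serves as the required holomorphic extension. Both arguments are correct; the paper's citation is cleaner, while your hands-on version is self-contained and makes the mechanism visible. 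The ``obstacle'' you flag---matching the compact-open topology on $C^0([0,1],\Lf(\BGC))$ with the product topology on the complexification---is precisely the content absorbed by the cited corollary, and is indeed routine once one observes that the real/imaginary decomposition $\C^{\RT}\cong\R^{\RT}\times\R^{\RT}$ induces a topological isomorphism on the level of continuous paths.
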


\begin{proof}

 \begin{enumerate}
  \item By Proposition \ref{prop: solution} the differential equation \eqref{eq: ODE:regular} admits a (unique) solution on $[0,1]$ whence we obtain the flow of \eqref{eq: ODE:regular}
   \begin{displaymath}
    \Fl^f \colon [0,1] \times C^0 ([0,1] , \Lf(\BGC)) \rightarrow \BGC.
   \end{displaymath}
   By Proposition \ref{prop: sm:flow}, $\Fl^f$ is a $C^{1,\infty}_{\R,\C}$-map.
 In particular, for $\an \in C^0 ([0,1], \Lf(\BGC))$ we obtain a $C^1$-curve $\Fl^f (\cdot , \an) \colon [0,1] \rightarrow \BGC$ which solves
 \eqref{eq: ODE:regular}, whence $\Fl^f (\cdot , \an)$ is the right product integral of the curve $\an$.
 Fixing the time, we obtain a smooth and even holomorphic mapping 
  \begin{displaymath}
   \evol \coloneq \Fl^f (1 , \cdot) \colon C^0 ([0,1] , \Lf(\BGC)) \rightarrow \BGC
  \end{displaymath}
  sending a curve $\an \in C^0([0,1], \Lf(\BGC))$ to the time $1$ evolution of its right product integral. 
  In summary, every continuous curve into $\Lf(\BGC)$ possesses a right product integral and the evolution map is smooth, 
  i.e.\ $\BGC$ is $C^0$-regular.
  Taking the complex structure into account, the evolution map is even holomorphic.
  \item Follows directly from part (a) and \cite[Corollary 9.10]{1208.0715v3} since $\BGC$ is a complexification of $\BGp$. 
 \qedhere
 \end{enumerate}
\end{proof}

\section{The Butcher group as an exponential Lie group}\label{sect: LGP:exp}

In the last section we have seen that the Butcher group (and the complex Butcher group) are $C^0$-regular. 
Restricting the evolution map to constant curves we obtain the exponential map. 
In the following, we identify $\Lf(\BGC)$ with the constant curves in $C^0([0,1], \Lf(\BGC))$ and write $\an$ for the constant curve $t\mapsto \an$.
Namely, we have

\begin{setup}
\label{setup: exponential}
For the complex Butcher group $\BGC$ the Lie group exponential map is given by 
\begin{displaymath}
\exp_{\BGC} \colon \Lf(\BGC) \to \BGC, \exp_{\BGC}(\an) = \evol_{\BGC} (t\mapsto \an) =\Fl^f(1, \an).
\end{displaymath}
By Theorem \ref{thm: BGp:reg} $\exp_{\BGC}$ is holomorphic and $\exp_{\BGp}$ is a real analytic map. 

To ease the computation, we choose and fix an enumeration of $\RT$ which satisfies \eqref{eq: ordering}. 
Now the curve $\gamma_{\an} (s) \coloneq \Fl^f(s, \an)$ is the solution to a countable system of differential equations.
Describing the system componentwise, we obtain for the $k$th component $\ev_{\tau_k} (\gamma_{\an}(t))$ of $\gamma_{\an}$ the differential equation
\begin{equation} \label{eq: ODE:exponential}
\ev_{\tau_k}(\gamma_{\an}'(t)) = \ev_{\tau_k} (\an)+ \sum_{l<k}A_{kl}(t,\an) \ev_{\tau_l}(\gamma_{\an} (t)), \qquad \ev_{\tau_k}(\gamma_{\an}(0))=0,
\end{equation}
where $A_{kl}(t ,\an)$ is a polynomial in $\{\ev_{\tau_1} (\an(t)), \dotsc,\ev_{\tau_{k-1}}(\an(t))\}$.
In this case, $A_{kl}(t, \an)$ is constant in $t$ (as $\an$ is constant in $t$) and we will write $A_{kl}  (\an) \coloneq A_{kl} (t, \an)$.
\end{setup}

We have already seen that $\exp_{\BGC}$ is a holomorphic and thus complex analytic mapping. 
Now we claim that $\exp_{\BGC}$ is a bijection whose inverse $\exp_{\BGC}^{-1}\colon \BGC \to L(\BGC)$ is complex analytic.

\begin{prop} \label{prop: logarithm}
For $b\in \BGC$, the equation $\exp_{\BGC}(\an)=b$ has exactly one solution $\LOG_{\BGC} (b) \in \Lf(\BGC)$.
If $b$ is contained in the subgroup $\BGp$, then $\LOG_{\BGC} (b)$ is contained in the real subalgebra $\Lf(\BGp)$.
\end{prop}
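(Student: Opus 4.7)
The plan is to exploit the lower-triangular structure of the countable ODE system \eqref{eq: ODE:exponential} that defines $\exp_{\BGC}(\an)$ componentwise in the enumeration $\tau_1,\tau_2,\ldots$ of $\RT$ fixed in \ref{setup: exponential}. This structure will force $\exp_{\BGC}$ to act on the coefficients of $\an$ as an explicit lower-triangular polynomial map with identity linear part, which is manifestly invertible by recursion; the inverse then furnishes $\LOG_{\BGC}$ and automatically inherits the real-ness of $b$.

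More precisely, by induction on $k$ I would establish the explicit form
\begin{displaymath}
\ev_{\tau_k}(\gamma_{\an}(t)) = t\,\ev_{\tau_k}(\an) + R_k\bigl(t,\ev_{\tau_1}(\an),\ldots,\ev_{\tau_{k-1}}(\an)\bigr),
\end{displaymath}
where $R_k$ is a polynomial in its arguments with rational (and in particular real) coefficients. The base case $k=1$ is trivial because $\SP{\bullet}_1 = \emptyset$, so \eqref{eq: ODE:exponential} reduces to $\ev_{\bullet}(\gamma_{\an}(t))' = \ev_{\bullet}(\an)$, giving $\ev_{\bullet}(\gamma_{\an}(t)) = t\,\ev_{\bullet}(\an)$. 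For the inductive step I use the observation already exploited in Proposition \ref{prop: sm:flow} that each $A_{kl}(\an)$ is a finite sum of values $\ev_{\tau_k\setminus s}(\an)$ taken over $s\in\SP{\tau_k}_1$ with $s_{\tau_k}=\tau_l$; since $\tau_k\setminus s$ has strictly fewer nodes than $\tau_k$, the enumeration property \eqref{eq: ordering} guarantees $\tau_k\setminus s = \tau_m$ with $m<k$. Substituting the inductive hypothesis into \eqref{eq: ODE:exponential} and then integrating $\int_0^t$ preserves polynomiality and introduces only rational factors, producing $R_k$ of the claimed form, with the leading $t\,\ev_{\tau_k}(\an)$ coming from the inhomogeneous term $\ev_{\tau_k}(\an)$ in \eqref{eq: ODE:exponential}.

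Setting $t=1$ and writing $Q_k(x_1,\ldots,x_{k-1})\coloneq R_k(1,x_1,\ldots,x_{k-1})$, the equation $\exp_{\BGC}(\an) = b$ amounts to the triangular polynomial system
\begin{displaymath}
\ev_{\tau_k}(b) = \ev_{\tau_k}(\an) + Q_k\bigl(\ev_{\tau_1}(\an),\ldots,\ev_{\tau_{k-1}}(\an)\bigr), \qquad k\in\N,
\end{displaymath}
which has a unique solution, obtained recursively via $\ev_{\tau_1}(\an) \coloneq \ev_{\tau_1}(b)$ and $\ev_{\tau_k}(\an) \coloneq \ev_{\tau_k}(b) - Q_k(\ev_{\tau_1}(\an),\ldots,\ev_{\tau_{k-1}}(\an))$. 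This defines $\LOG_{\BGC}(b)\in\C^{\RT}$ unambiguously; and if $b\in\BGp$, so that each $\ev_{\tau_k}(b)\in\R$, then the real coefficients of $Q_k$ propagate inductively through the recursion and yield $\ev_{\tau_k}(\LOG_{\BGC}(b))\in\R$ for every $k$, whence $\LOG_{\BGC}(b)\in\Lf(\BGp)$.

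The main point requiring care is the combinatorial bookkeeping in the inductive step: one must verify that, after substituting the expressions for $\ev_{\tau_l}(\gamma_{\an}(t))$ with $l<k$ into the right-hand side of \eqref{eq: ODE:exponential}, no coefficient $\ev_{\tau_m}(\an)$ with $m\geq k$ appears. This is exactly the role of the order-respecting enumeration \eqref{eq: ordering} combined with the strict inequality $|\tau_k\setminus s|<|\tau_k|$ for $s\in\SP{\tau_k}_1$. Beyond this, the argument is a finite-dimensional triangular-inversion exercise, and no further analytic input is needed for existence, uniqueness, or preservation of the real subgroup.
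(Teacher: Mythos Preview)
Your proposal is correct and follows essentially the same approach as the paper: both exploit the strictly lower-triangular structure of \eqref{eq: ODE:exponential} to solve for $\an$ by induction on $k$, integrating the ODE at each step to express $b(\tau_k)$ as $\an(\tau_k)$ plus a quantity determined by the lower components. Your version is slightly more explicit in that you record the polynomial form $\ev_{\tau_k}(\gamma_{\an}(t)) = t\,\ev_{\tau_k}(\an) + R_k(\ldots)$ with rational coefficients up front, which makes the reality argument for $b\in\BGp$ cleaner than the paper's more implicit ``inductively the equations show real values''; but the underlying mechanism is the same.
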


Note that an algebraic formula for $\LOG_{\BGC} (b)$ is derived in \cite[IX.9.1]{HLW2006} using similar methods as in the following proof.

\begin{proof}[Proof of \ref{prop: logarithm}]
We seek $\an \in \Lf(\BGC)$ such that the $C^1$-curve $\gamma_{\an} \colon [0,1] \rightarrow \BGC$ which solves \eqref{eq: ODE:exponential} for all $k\in \N$ also satisfies $\gamma_{\an}(1)=b$.
Thus we seek a curve which satisfies \eqref{eq: ODE:exponential} and $\ev_{\tau_k}(\gamma_{\an} (1)) = \ev_{\tau_k} (b)$, for all $k \in \N$.

Construct $\an$ by induction over $k \in \N$ (using the enumeration of trees).
Thus let $k=1$, i.e.\ $\tau_1 = \bullet$ (the one node tree) and consider \eqref{eq: ODE:exponential} and the above condition. 
We obtain 
\begin{equation}\label{eq: logarithmone}
 b(\bullet) = \ev_{\bullet}(\gamma_{\an}(1)) \text{ and }  \begin{cases}
                                               \ev_{\bullet}(\gamma_{\an}'(t)) &= \ev_\bullet (\an) = \an (\bullet) \\
                                               \ev_{\bullet}(\gamma_{\an}(0)) &=0,
                                              \end{cases}.
\end{equation}
Set $\ev_{\bullet}(\gamma_{\an} (t)) = t b(\bullet)$ for $0\leq t\leq 1$ to obtain a $C^1$-curve which satisfies \eqref{eq: logarithmone}. 
This entails $\ev_\bullet  (\an) = \an (\bullet) = b(\bullet)$.

Having dealt with the start of the induction, assume now that for $k > 1$ the values $\an (\tau_1), \dotsc, \an (\tau_{k-1})$ of $\an$ are known.
From the proof of Proposition \ref{prop: solution}, we then also know $\ev_{\tau_1}\circ\gamma_{\an}, \dotsc, \ev_{\tau_{k-1}}\circ\gamma_{\an}$.
Now $\ev_{\tau_k}\circ\gamma_{\an}$ is determined by the two conditions
\begin{equation}\label{eq: odecomponent}
\ev_{\tau_k}(\gamma_{\an} (1)) = b(\tau_k) \text{ and } \begin{cases}
                                               \ev_{\tau_k}(\gamma_{\an}'(t)) &= \an (\tau_k) + \sum_{l<k} A_{kl}(\an) \ev_{\tau_l}(\gamma_{\an}(t)), \\
                                               \ev_{\tau_k}(\gamma_{\an}(0)) &=0
                                              \end{cases}.
\end{equation}
The fundamental theorem of calculus \cite[Theorem 1.5]{hg2002a} allows us to rewrite the condition \eqref{eq: odecomponent} as 
\begin{equation}\label{eq: logimplicit} \begin{aligned}
                                       b(\tau_k) &= \ev_{\tau_k}(\gamma_{\an}(1)) = \ev_{\tau_k}(\gamma_{\an}(1)) - \ev_{\tau_k}(\gamma_{\an}(0)) = \int_0^1 \ev_{\tau_k}(\gamma_{\an} (t))\, \di t \\
						     &= \an(\tau_k) + \sum_{l<k} A_{kl}(\an) \int_{0}^{1} \ev_{\tau_l}(\gamma_{\an}(t))\, \di t.
                                        \end{aligned}
\end{equation}
Recall that the polynomials $A_{kl}(\an)$ depend only on the value of the first $k-1$ components of $\an$. 
As those together with $\ev_{\tau_l}\circ\gamma_{\an}$ for $1\leq l < k$ are known, this defines $\an (\tau_k)$.

If $b$ is contained in $\BGp$, then inductively, \eqref{eq: logarithmone} and \eqref{eq: logimplicit} show that $\an$ takes as values only real numbers and each of the $\ev_{\tau_k}\gamma_{\an}$ is real valued (cf.\ Proposition \ref{prop: solution}). 
We conclude that $\LOG_{\BGC} (b)$ is contained in $L(\BGp)$ if $b$ is in $\BGp$.
\end{proof}

\begin{prop}\label{prop: log:diff}
 With the notation of Proposition \ref{prop: logarithm} we define maps 
 \begin{align*}
  \exp_{\BGC}^{-1} &\colon \BGC \rightarrow \Lf(\BGC), b \mapsto \LOG_{\BGC} (b) \\
  \exp_{\BGp}^{-1} &\colon \BGp \rightarrow \Lf(\BGp), b \mapsto \LOG_{\BGC} (b).
 \end{align*}
Then $\exp_{\BGC}^{-1}$ is holomorphic and $\exp_{\BGp}^{-1}$ is real analytic.
\end{prop}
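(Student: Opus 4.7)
The plan is to reduce the question to polynomial inversion in finitely many complex variables by exploiting the strictly lower triangular structure of the system \eqref{eq: ODE:exponential}. Fix the enumeration of $\RT$ from \ref{setup: exponential}. As a first step I would reduce to showing holomorphy of each component $\ev_{\tau_k} \circ \exp_{\BGC}^{-1}$ by the same product-topology argument used in Lemma \ref{lem: bgp:prod}(c) (applied to the target Fr\'{e}chet space $\C^{\RT} = \Lf(\BGC)$ in place of the affine subspace $\BGC$), together with the fact that a $C^1_\C$-map into a complete locally convex space is complex analytic (Remark \ref{rem: analytic}).

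The main claim, to be established by induction on $k$, is that $\ev_{\tau_k} \circ \exp_{\BGC}^{-1}(b)$ is a polynomial in the finitely many variables $b(\tau_1), \dotsc, b(\tau_k)$ with real coefficients. Inspection of \eqref{eq: odecomponent} and \eqref{eq: logimplicit} shows that
\begin{displaymath}
\ev_{\tau_k}(\exp_{\BGC}(\an)) \;=\; \an(\tau_k) + Q_k\bigl(\an(\tau_1),\dotsc,\an(\tau_{k-1})\bigr)
\end{displaymath}
for a polynomial $Q_k$: the only dependence on $\an(\tau_k)$ is the linear inhomogeneous term, while the coefficients $A_{kl}(\an)$ are polynomials in $\an(\tau_1),\dotsc,\an(\tau_{k-1})$ (this is where the ordering \eqref{eq: ordering} is crucial, and it is immediate from the derivation of \eqref{eq: lower:diag} via \eqref{eq: d:trans}), and integrating the polynomials-in-$t$ produced by the inductive hypothesis over $[0,1]$ yields polynomials with rational coefficients. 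Rearranging the identity $b = \exp_{\BGC}(\an)$ as $\an(\tau_k) = b(\tau_k) - Q_k(\an(\tau_1),\dotsc,\an(\tau_{k-1}))$ and substituting the inductive expressions for $\an(\tau_l)$, $l<k$, yields $\ev_{\tau_k} \circ \exp_{\BGC}^{-1}(b)$ as a polynomial in $b(\tau_1),\dotsc,b(\tau_k)$. The main obstacle is the bookkeeping here; nothing deep is going on, but one has to track how the polynomial structure propagates through the integration in \eqref{eq: logimplicit}.

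Since polynomials in finitely many variables are holomorphic, the componentwise reduction gives holomorphy of $\exp_{\BGC}^{-1}$. For the real-analytic assertion, the polynomials produced in the induction have real coefficients, so the holomorphic map $\exp_{\BGC}^{-1}$ sends $\BGp \subseteq \BGC$ into $\Lf(\BGp) \subseteq \Lf(\BGC)$ and its restriction coincides with $\exp_{\BGp}^{-1}$. Since $\BGC$ is the complexification of $\BGp$ (proof of Theorem \ref{thm: Butcher:Lie}) and $\C^{\RT}$ is the complexification of $\R^{\RT}$ (Lemma \ref{lem: mspace:compl}), $\exp_{\BGC}^{-1}$ is precisely the required $C^\infty_\C$-extension, so $\exp_{\BGp}^{-1}$ is real analytic by definition.
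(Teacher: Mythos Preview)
Your approach is correct and genuinely different from the paper's. Both proofs reduce to showing that each component $\ev_{\tau_k}\circ\exp_{\BGC}^{-1}$ is holomorphic and proceed by induction on $k$, but the inductive step is handled very differently. The paper treats the component $\LOG_k$ analytically: it invokes the exponential law and the $C^{r,s}_{\R,\C}$ parameter-dependence theory for flows from \cite{alas2012} to show that the partial solutions $\gamma_l$ depend holomorphically on the initial data, then composes with the continuous linear integral operator. You instead exploit the strictly lower triangular structure to show something sharper, namely that each $\ev_{\tau_k}\circ\exp_{\BGC}^{-1}$ is a \emph{polynomial} with real (indeed rational) coefficients in $b(\tau_1),\dotsc,b(\tau_k)$; holomorphy and the real-analytic restriction are then immediate. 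Your route is more elementary---it bypasses the $C^{r,s}$ machinery entirely---and yields the extra information that $\exp_{\BGC}^{-1}$ is componentwise polynomial, which the paper's argument does not make explicit.

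One expository point to tighten: your stated induction hypothesis concerns only the inverse map, but in the step you need that $\ev_{\tau_l}(\gamma_{\an}(t))$ is a polynomial in $t$ and in $\an(\tau_1),\dotsc,\an(\tau_l)$---a statement about the \emph{forward} flow. This does not follow from ``$\ev_{\tau_l}\circ\exp_{\BGC}^{-1}$ is polynomial'' alone; it is a separate (equally easy) induction using \eqref{eq: ODE:exponential}. State the joint induction hypothesis explicitly: (i) $\ev_{\tau_l}(\gamma_{\an}(t))$ is polynomial in $t,\an(\tau_1),\dotsc,\an(\tau_l)$, and (ii) $\ev_{\tau_l}\circ\exp_{\BGC}^{-1}(b)$ is polynomial in $b(\tau_1),\dotsc,b(\tau_l)$. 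Then (i) at level $k$ follows by integrating the right-hand side of \eqref{eq: ODE:exponential} (using (i) for $l<k$ and the fact that $A_{kl}$ is linear in the $\an(\tau_j)$ with integer coefficients), and (ii) at level $k$ follows from \eqref{eq: logimplicit} as you wrote.
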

\begin{proof}
 Since $\BGC$ is a complexification of $\BGp$, Proposition \ref{prop: logarithm} shows that it suffices to prove the assertions for $\exp_{\BGC}^{-1}$.
 We define for each $k \in \N$ a map 
  \begin{displaymath}
   \LOG_k \colon \C^k \to \C, \LOG_k ((z_1,\ldots, z_k)) \coloneq \ev_{\tau_k} (\exp_{\BGC}^{-1} (\hat{z})),
  \end{displaymath}
 where $\hat{z} \in \BGC$ with $\ev_{\tau_i} (\hat{z}) = z_i$ for all $1\leq i \leq k$.
  Recall from \eqref{eq: logimplicit} that the value of $\ev_{\tau_k}\circ \exp_{\BGC}^{-1}(b)$ only depends on $\PR_k(b)= (\ev_{\tau_1} (b), \ldots, \ev_{\tau_k} (b))$, whence $ \LOG_k$ is well-defined.
 Furthermore, $\LOG_k \circ \PR_k = \ev_{\tau_k} \circ \exp_{\BGC}^{-1}$, where $\PR_k\colon \C^{\RT}\to \C^k, \PR_k \coloneq (\ev_{\tau_1}, \ev_{\tau_2}, \dotsc, \ev_{\tau_k})$ is holomorphic.
 Since a map into a product is holomorphic if its components are holomorphic, $ \exp_{\BGC}^{-1}$ is holomorphic if for each $k \in \N$ the map $\LOG_k$ is holomorphic.
 We proceed by induction on $k \in \N$.\bigskip
 
 For $k=1$ the identity \eqref{eq: logarithmone} shows that $\LOG_1$ is $\id_\C$ and thus holomorphic.

 Now let $k> 1$ and assume that for $l < k$ the mappings $\LOG_l \colon \C^l \rightarrow \C$ are holomorphic.
 We show that $\LOG_k$, implicitly defined by \eqref{eq: logimplicit}, splits into a composition of holomorphic mappings.
For $\an \in \Lf (\BGC)$ let $\gamma_{\an}\colon [0,1] \to \BGC$ be the curve $\gamma_{\an} (t) = \exp_{\BGC} (t\an)$. 
As $\ev_{\tau_l} \circ \gamma_ {\an}$ depends only on $(z_1,\ldots,z_l)\in \C^l$ and solves \eqref{eq: ODE:exponential} for all $l<k$, we derive 
  \begin{align*}
   \LOG_k (z_1,\ldots , z_k) &= z_k - \sum_{l < k} A_{kl} (\hat{\cn}) \int_0^1 \gamma_{l,\hat{\cn}} (t) \di t.\\
   \text{with } \hat{\cn} \in \Lf (\BGC)  \text{ such that } \ev_{\tau_l} (\hat{\cn}) &= \LOG_l (z_1,\ldots, z_l) \text{ for all } 1\leq l <k.
   \end{align*}
 Here $A_{kl}$ is a polynomial in the first $l$ components of $\hat{\cn}$ whence the previous formula is well defined.
 Consider  
 \begin{displaymath} 
  L \colon \C^k \rightarrow \C^{k-1}, (z_1, \dotsc, z_k) \mapsto (\LOG_1 (z_1) , \dotsc, \LOG_{k-1} (z_1, \dotsc , z_{k-1})),
 \end{displaymath}
 By the induction hypothesis, $\LOG_l$ is holomorphic for $l<k$, so $L$ is holomorphic.
 For $1\leq l< k$ the map $\gamma_l \colon \C^{l} \rightarrow C^1 ([0,1] , \C)$ sending $\Pr_k (\hat{\cn})$ to the solution $\ev_{\tau_l}\circ \gamma_{\hat{\cn}}$ of \eqref{eq: ODE:exponential} is holomorphic.
 Here $C^1 ([0,1], \C)$ with the topology of uniform convergence is a complex Banach space (cf.\ Example \ref{ex: funspace}).
 The exponential law \cite[Theorem A]{alas2012} implies that $\gamma_l$ will be holomorphic if and only if
 $\gamma_l^\vee \colon [0,1] \times \C^l \rightarrow \C, \gamma_l^\vee (t,z)\coloneq \gamma_l(z)(t)$ is a $C^{1,\infty}_{\R,\C}$-map.
 By construction $\gamma_l^\vee$ is the flow associated to the differential equation \eqref{eq: ODE:exponential}. 
 Note that by the induction hypothesis the right hand side of the differential equation is a $C^{0,\infty}_{\R,\C}$-mapping. 
 From \cite[Theorem C]{alas2012} we infer that $\gamma_l^\vee$ is a $C^{1,\infty}_{\R,\C}$-mapping and thus $\gamma_l$ is holomorphic.
 Define for $1\leq l < k$ the map 
 \begin{align*}
  \Gamma_l\colon \C^{k-1} &\rightarrow \C, (z_1,\ldots, z_{k-1}) \mapsto \int_0^1 \gamma_l (z_1,\ldots, z_l)(t) \di t .
 \end{align*}
 Recall that the integral operator $\int_0^1 \colon C^1 ([0,1] , \C) \rightarrow \C$ is continuous linear and $\gamma_l$ is holomorphic. 
 Thus $\Gamma_l$ is holomorphic.  
 Finally, write  
 \begin{align*}
  \LOG_k (z_1,\ldots, z_k) = z_k + \sum_{1 \leq l<k} A_{kl} \circ L (z_1,\ldots , z_{k-1}) \cdot \Gamma_l \circ L (z_1,\ldots z_{k-1})
 \end{align*}
 as a composition of holomorphic maps, whence $\LOG_k$ is holomorphic. 
\end{proof}

From Proposition \ref{prop: log:diff} we immediately deduce the following theorem.

\begin{thm} \label{thm: exp:ana}
Let $G$ be either the complex Butcher group $\BGC$ or the Butcher group $\BGp$. 
Then $G$ is an exponential Lie group i.e.\ the exponential map $\exp_G \colon \Lf(G) \rightarrow G$ is a global diffeomorphism.
Note that $\exp_G$ is even an analytic diffeomorphism
\end{thm}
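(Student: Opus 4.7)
The plan is to simply assemble the ingredients already established in Propositions \ref{prop: logarithm} and \ref{prop: log:diff}, together with the regularity result Theorem \ref{thm: BGp:reg}. There is essentially nothing new to compute; the task is to verify that smoothness in both directions plus bijectivity yields the claimed (analytic) diffeomorphism.

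First I would recall from \ref{setup: exponential} that restricting the evolution map to constant curves gives $\exp_{\BGC}(\an) = \evol_{\BGC}(t\mapsto \an) = \Fl^f(1,\an)$, and analogously for $\BGp$. Since $\evol_{\BGC}$ is holomorphic and $\evol_{\BGp}$ is real analytic by Theorem \ref{thm: BGp:reg}, and since the inclusion $\Lf(G) \hookrightarrow C^0([0,1],\Lf(G))$ as constant curves is continuous linear (hence analytic over the respective base field), it follows by the chain rule that $\exp_{\BGC}$ is holomorphic and $\exp_{\BGp}$ is real analytic.

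Next, Proposition \ref{prop: logarithm} asserts that for every $b \in \BGC$ the equation $\exp_{\BGC}(\an) = b$ has a unique solution $\LOG_{\BGC}(b) \in \Lf(\BGC)$, and moreover that this solution lies in the real subalgebra $\Lf(\BGp) \subseteq \Lf(\BGC)$ whenever $b \in \BGp$. This tells us that $\exp_{\BGC}\colon \Lf(\BGC) \to \BGC$ is a bijection, and that its restriction to $\Lf(\BGp)$ is a bijection onto $\BGp$, i.e., $\exp_{\BGp}\colon \Lf(\BGp) \to \BGp$ is a bijection with inverse $\exp_{\BGp}^{-1}$ (the restriction of $\LOG_{\BGC}$).

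Finally, Proposition \ref{prop: log:diff} supplies precisely what is missing: the inverse $\exp_{\BGC}^{-1} = \LOG_{\BGC}$ is holomorphic, and $\exp_{\BGp}^{-1}$ is real analytic. Combining this with the analyticity of the exponential maps themselves yields that $\exp_{\BGC}$ is a holomorphic (i.e., complex analytic) diffeomorphism and $\exp_{\BGp}$ is a real analytic diffeomorphism; in particular each is a global diffeomorphism, so both Lie groups are exponential as asserted. There is no real obstacle here beyond bookkeeping, since all the analytic and bijectivity content has already been established in the preceding two propositions.
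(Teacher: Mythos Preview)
Your proposal is correct and follows exactly the route the paper takes: the paper simply states that the theorem follows immediately from Proposition~\ref{prop: log:diff}, and your write-up spells out the assembly (analyticity of $\exp_G$ from \ref{setup: exponential}/Theorem~\ref{thm: BGp:reg}, bijectivity from Proposition~\ref{prop: logarithm}, analyticity of the inverse from Proposition~\ref{prop: log:diff}) that the paper leaves implicit.
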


\begin{rem}
 Recall from \cite[Definition IV.1.9]{neeb2006} that a Lie group whose associated exponential map is an analytic (local) diffeomorphism is a \emph{Baker--Campbell--Hausdorff (BCH)} Lie group.
Thus Theorem \ref{thm: exp:ana} shows that $\BGC$ and $\BGp$ are BCH Lie groups. 

Note that this entails that $\Lf(\BGC)$ and $\Lf (\BGp)$ are BCH Lie algebras, 
i.e.\ these Lie algebras admit a zero-neighbourhood $U$ such that for all $x,y \in U$ the Baker--Campbell--Hausdorff-series $\sum_{n=1}^\infty H_n (x,y)$ converges (see \cite[Definition IV.1.3]{neeb2006}) and defines an analytic product.
In fact  from \cite[Theorem IV.2.8]{neeb2006} we derive that the BCH-series is the Taylor series of the local multiplication (cf.\ Theorem \ref{thm: Butcher:Lie} and Section \ref{sect: BG-LAlg})
  \begin{displaymath}
   \ast \colon \Lf (\BGC) \times \Lf (\BGC) \rightarrow \Lf (\BGC), \an\ast \bn = (\an +e) \cdot (\bn+e) -e.
  \end{displaymath}
\end{rem}

\section{The subgroup of symplectic tree maps}\label{sect: subgp}

In this section we show that the subgroup of symplectic tree maps is a closed Lie subgroup of $\BGC$.

\begin{rem}
 Recall from (cf.\ \cite[p.81]{Butcher72}) the definition of the \emph{Butcher product} (not to be confused with the product in the Butcher group): 
 
 For two trees $u,v$ we denote by $u \circ v$ the Butcher product, 
 defined as the rooted tree obtained by adding an edge from the root of $v$ to the root of $u$, 
 and letting the root of $u$ be the root of the full tree.
 
 Let us illustrate the Butcher product with some examples involving trees with one and two nodes (in the picture the node at the bottom is the root of the tree):

  \begin{displaymath}
      \begin{tikzpicture}[dtree, scale=1.5]
        \node[dtree black node] {}
        ;
      \end{tikzpicture}
       \circ 
          \begin{tikzpicture}[dtree,scale=1.5]
            \node[dtree black node] {}
            ;
          \end{tikzpicture} 
          =  
          \begin{tikzpicture}[dtree,scale=1.5]
             \node[dtree black node] {}
             child { node[dtree black node] {} }
             ;
           \end{tikzpicture}, \quad \quad
             \begin{tikzpicture}[dtree]
               \node[dtree black node] {}
               ;
             \end{tikzpicture} \circ
              \begin{tikzpicture}[dtree, scale=1.5]
                \node[dtree black node] {}
                child { node[dtree black node] {} }
                ;
              \end{tikzpicture} = 
               \begin{tikzpicture}[dtree, scale=1.5]
                 \node[dtree black node] {}
                 child { node[dtree black node] {}
                 child {node[dtree black node] {}}}
                 ;
               \end{tikzpicture}
               , \quad \quad
               \begin{tikzpicture}[dtree, scale=1.5]
                 \node[dtree black node] {}
                 child { node[dtree black node] {} }
                 ;
               \end{tikzpicture}
               \circ
                         \begin{tikzpicture}[dtree, scale=1.5]
                            \node[dtree black node] {}
                            ;
                          \end{tikzpicture} =
                            \begin{tikzpicture}[dtree, scale=1.5]
                              \node[dtree black node] {}
                              child { node[dtree black node] {}
                              }
                              child {node[dtree black node] {}}
                              ;
                            \end{tikzpicture}, \quad \quad
                \begin{tikzpicture}[dtree, scale=1.5]
                  \node[dtree black node] {}
                  child { node[dtree black node] {} }
                  ;
                \end{tikzpicture} \circ
                  \begin{tikzpicture}[dtree, scale=1.5]
                    \node[dtree black node] {}
                    child { node[dtree black node] {} }
                    ;
                  \end{tikzpicture}=
                   \begin{tikzpicture}[dtree, scale=1.5]
                     \node[dtree black node] {}
                     child { node[dtree black node] {}
                     }
                     child {node[dtree black node] {}
                     child { node[dtree black node] {}
                     }}
                     ;
                   \end{tikzpicture}
  \end{displaymath}
\end{rem}

\begin{defn}\begin{enumerate}
             \item A tree map $a$ is called \emph{symplectic} if it satisfies the condition  
\begin{equation}\label{eq: symplectic}
 P_{u,v}(a) \coloneq a(u \circ v) + a(v \circ u) - a(u) a(v)=0, \qquad \forall u, v \in \RT.
\end{equation}
\item We let $\SymplTM$ be the \emph{subset of all symplectic tree maps} in $\BGC$. 
Note that the group multiplication of the Butcher group turns $\SymplTM$ into a subgroup (see Lemma \ref{lem: ssgroup:clsd}). 
\end{enumerate}\label{defn: sympl:tree}
\end{defn}

For the reader's convenience we recall the proof of the subgroup property for $\SymplTM$. 

\begin{lem}\label{lem: ssgroup:clsd}
 The set $\SymplTM$ of symplectic tree maps is a closed subgroup of $\BGC$.
\end{lem}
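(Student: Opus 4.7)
The proof plan splits into two independent verifications: closedness of $\SymplTM$ inside $\BGC$, and the subgroup axioms with respect to the group operations inherited from $\BGC$.

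For closedness, observe that for each pair $u,v\in\RT$ the polynomial $P_{u,v}$ is a finite algebraic combination of the continuous linear evaluations $\ev_{u\circ v},\ev_{v\circ u},\ev_u,\ev_v$ (Lemma \ref{lem: bgp:prod}(a)), hence $P_{u,v}\colon \BGC\to\C$ is continuous. Therefore
\[
\SymplTM \;=\; \bigcap_{u,v\in\RT} P_{u,v}^{-1}(0)
\]
is an intersection of closed sets, and so is itself closed.

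For the subgroup axioms, the identity $e\in\SymplTM$ holds trivially since $e$ vanishes on every $\tau\in\RT$. The central step is multiplicative closure. Given $a,b\in\SymplTM$ and $u,v\in\RT$, I would expand $(a\cdot b)(u\circ v)$ via \eqref{eq: BGP:mult}, using the following transparent description of $\OST(u\circ v)$: a non-empty ordered subtree of $u\circ v$ has the form $s_u\cup s_v$ with $\emptyset\neq s_u\in\OST(u)$ and $s_v\in\OST(v)$, and the induced rooted tree $s_{u\circ v}$ equals $(s_u)_u$ if $s_v=\emptyset$ and equals $(s_u)_u\circ (s_v)_v$ otherwise. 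Combining the analogous expansions for $(a\cdot b)(u\circ v)$, $(a\cdot b)(v\circ u)$ and the product $(a\cdot b)(u)\cdot (a\cdot b)(v)$, the quantity $P_{u,v}(a\cdot b)$ regroups as $P_{u,v}(a)$ plus a mixed part involving factors of $a(u)$ or $a(v)$ that cancels identically, plus the double sum
\[
\sum_{\substack{s_u\in\OST(u)\setminus\{\emptyset\}\\ s_v\in\OST(v)\setminus\{\emptyset\}}} P_{(s_u)_u,(s_v)_v}(b)\cdot a(u\setminus s_u)\, a(v\setminus s_v).
\]
Every term vanishes by the symplecticity of $a$ and $b$, giving $P_{u,v}(a\cdot b)=0$.

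I expect the main obstacle is closure under inversion, where the cleanest direct route is to repeat the same type of combinatorial split, this time for the partition sums on $u\circ v$ and $v\circ u$ appearing in \eqref{eq: BGP:invers}, and to verify cancellation against $P_{u,v}(a)=0$. Alternatively, and more conceptually, once multiplicative closure is in hand one can appeal to the Connes--Kreimer Hopf algebra point of view (cf.\ Remark \ref{remark: CKLie}): the two-sided ideal generated by the relations $u\circ v+v\circ u-uv$ is in fact a Hopf ideal in $\hH$, whence $\SymplTM$ coincides with the character group of the quotient Hopf algebra and is automatically stable under the antipode-induced inversion. Either approach supplies the last axiom and completes the proof.
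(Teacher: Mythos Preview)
Your closedness argument is identical to the paper's. For the subgroup property, however, you take a genuinely different route. The paper does not do any combinatorics at all: it invokes \cite[Theorem VI.7.6]{HLW2006}, which characterises symplectic tree maps as those whose associated B-series preserve certain quadratic first integrals, and then observes that the Butcher-group product corresponds to composition of B-series, so the preservation property is inherited by products. Your argument is instead a direct algebraic computation inside the group, based on the decomposition of $\OST(u\circ v)$ into pairs $(s_u,s_v)$; the resulting identity
\[
P_{u,v}(a\cdot b)=P_{u,v}(a)+\sum_{\substack{s_u\neq\emptyset\\ s_v\neq\emptyset}} P_{(s_u)_u,(s_v)_v}(b)\,a(u\setminus s_u)\,a(v\setminus s_v)
\]
is correct and gives a clean, self-contained proof of multiplicative closure that does not rely on the B-series interpretation or on any external reference. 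That is a genuine gain in transparency over the paper's argument.

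The one soft spot is inversion, which you explicitly flag as the remaining obstacle and for which you only sketch two routes. Both are viable: the partition-sum computation can indeed be pushed through (tediously), and the Hopf-ideal argument is the standard conceptual explanation---the ideal generated by the elements $u\circ v+v\circ u-uv$ is a Hopf ideal of the Connes--Kreimer algebra, so $\SymplTM$ is the character group of the quotient Hopf algebra and hence automatically closed under inversion. The paper, for its part, does not spell out inversion either; it is implicitly covered by the same external reference (inverses of symplectic integrators are symplectic). So your level of detail here is comparable to the paper's, though in a fully self-contained write-up you would want to complete one of your two sketches.
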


\begin{proof} Let us first establish the subgroup property.
To see that $\SymplTM$ is a subgroup one uses \cite[Theorem VI.7.6]{HLW2006} twice. 
 First by \cite[Theorem VI.7.6]{HLW2006} the Butcher series associated to a symplectic tree map preserves certain quadratic first integrals. 
 Now the product in the Butcher group corresponds to the composition of Butcher series (cf.\ \cite[III.1.4]{HLW2006}). 
 As the Butcher series preserve the quadratic first integrals the same holds for their composition, whence by  \cite[Theorem VI.7.6]{HLW2006} the product of symplectic tree maps is a symplectic tree map.
 
 To see that $\SymplTM$ is closed, note that $P_{u,v}$ is continuous for all $u,v \in \RT$.
 In fact this follows from the continuity of $\ev_{u\circ v}, \ev_{v\circ u}, \ev_{u}$ and $\ev_{v}$ (see Lemma \ref{lem: bgp:prod}).
 Now $\SymplTM = \bigcap_{u,v \in \RT} P_{u,v}^{-1} (0)$ is closed as an intersection of closed sets.  
 \end{proof} 

\begin{rem} Recall from \cite[Theorem VI.7.6]{HLW2006} that the integration method associated to a symplectic tree map is symplectic for general Hamiltonian systems $y' = J^{-1} \nabla H(y)$.
 Thus it becomes clear why the tree maps which satisfy \eqref{eq: symplectic} are called symplectic.
\end{rem}

\begin{setup}\label{setup: diff:tang}
 Recall from \cite[Proposition II.6.3]{neeb2006} that one can associate to the subgroup $\SymplTM \subseteq \BGC$ the \emph{differential tangent set} 
  \begin{displaymath}
   \Lf^d (\SymplTM) \coloneq \{\alpha'(0) \in T_e \BGC \mid \alpha \in C^1 ([0,1], \BGC), a(0) = e \text{ and } a([0,1]) \subseteq \SymplTM\}
  \end{displaymath}
 which is a Lie subalgebra of $\Lf (\BGC) = T_e \BGC$.
 Again we identify in the following the tangent space $T_a \BGC$ with $\C^{\RT}$. 
 \medskip
 
 Let us compute $\Lf^d (\SymplTM)$. 
 Consider $\gamma \in C^1 ([0,1], \BGC), \gamma (0) = e \text{ and } \gamma ([0,1]) \subseteq \SymplTM$.
 Observe that for a tree $u$, the map $\ev_u \circ \gamma \colon [0,1] \rightarrow \C$ is smooth and the chain rule yields $\tfrac{\partial}{\partial t} \ev_u \circ \gamma(t) = \ev_u (\tfrac{\partial}{\partial t} \gamma (t))$.
 Thus, for all $u,v \in \RT$ we have:
 \begin{equation} \label{eq: Pdiff} \begin{aligned}
\frac{\partial}{\partial t} P_{u, v}(\gamma(t)) =  &\left( \frac{\partial}{\partial t} \gamma(t)\right)(u \circ v) + \left( \frac{\partial}{\partial t}\gamma(t)\right)(v \circ u) -\\
 &\gamma(t)(v)  \left( \frac{\partial}{\partial t}\gamma(t)\right)(u) -  \gamma(t)(u)  \left( \frac{\partial}{\partial t}\gamma(t)\right)(v)=0.
 \end{aligned}
\end{equation}
 In particular for $t= 0$ this entails $\left(\left.\tfrac{\partial}{\partial t}\right|_{t=0}\gamma(t)\right)(u \circ v) + \left(\left.\tfrac{\partial}{\partial t}\right|_{t=0}\gamma(t)\right)(v \circ u) = 0$.
 The differential tangent set for the subgroup $\SymplTM$ is therefore given by 
 \begin{equation}\label{eq: symplecticalg}
  \Lf^d (\SymplTM) = \{\bn\in \Lf(\BGC) \mid Q_{u,v} (\bn) \coloneq \bn(u\circ v)+\bn(v\circ u)= 0\, \text{ for all } u,v\in \RT\}.
  \end{equation}
 Since $\SymplTM$ is a closed subgroup of a (locally) exponential Lie group, \cite[Lemma IV.3.1.]{neeb2006} shows that $\Lf^d (\SymplTM)$ is a closed Lie subalgebra and we can identify it with 
  \begin{displaymath}
   \Lf^d (\SymplTM) = \{x \in \Lf (\BGC) \mid \exp_{\BGC} (\R x) \subseteq \SymplTM\}.
  \end{displaymath}
\end{setup}

\begin{rem}
 The characterisation \eqref{eq: symplecticalg} of the differential tangent set of $\SymplTM$ exactly reproduces the condition in \cite[Remark IX.9.4]{HLW2006}.
 There the condition \eqref{eq: symplecticalg} characterises an element ``in the tangent space at the identity of $\SymplTM$''. 
 Note that in loc.cit.\ no differentiable structure on $\BGC$ or $\SymplTM$ is considered and a priori it is not clear whether $\SymplTM$ is actually a submanifold of $\BGC$.
 The differentiable structure of the Butcher group allows us to exactly recover the intuition of numerical analysts. 
 Indeed we will see that $\SymplTM$ is a submanifold of $\BGC$ such that $T_e \SymplTM = \Lf^d (\SymplTM)$. 
\end{rem}

\begin{prop}\label{prop: ssgp:alg}
 Let $\SymplTM$ be the subgroup of symplectic tree maps, then 
  \begin{displaymath}
   \exp_{\BGC} (\Lf^d (\SymplTM)) = \exp_{\BGC} (\Lf (\BGC)) \cap \SymplTM.
  \end{displaymath}
\end{prop}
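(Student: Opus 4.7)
The plan is to reduce the statement to showing $\exp_{\BGC}(\Lf^d(\SymplTM)) = \SymplTM$. Since $\exp_{\BGC}$ is a bijection onto $\BGC$ by Theorem \ref{thm: exp:ana}, we have $\exp_{\BGC}(\Lf(\BGC)) \cap \SymplTM = \SymplTM$, so the reduction is immediate. The inclusion $\exp_{\BGC}(\Lf^d(\SymplTM)) \subseteq \SymplTM$ then follows directly from the identification $\Lf^d(\SymplTM) = \{x \in \Lf(\BGC) \mid \exp_{\BGC}(\R x) \subseteq \SymplTM\}$ recorded in \ref{setup: diff:tang} by taking $t = 1$.

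For the reverse inclusion, fix $b \in \SymplTM$ and put $\an \coloneq \LOG_{\BGC}(b) \in \Lf(\BGC)$ via Proposition \ref{prop: logarithm}. The aim is to show that the one-parameter subgroup $\gamma_{\an}(t) \coloneq \exp_{\BGC}(t\an)$ is entirely contained in $\SymplTM$, which by the characterisation above yields $\an \in \Lf^d(\SymplTM)$. My key observation is that for every tree $\tau \in \RT$ the map $t \mapsto \ev_{\tau}(\gamma_{\an}(t))$ is in fact a \emph{polynomial} in $t$. Indeed, fixing an enumeration of $\RT$ as in \ref{setup: exponential}, the system \eqref{eq: ODE:exponential} is lower triangular and the coefficients $A_{kl}(\an)$ are constant in $t$ since $\an$ is a constant curve. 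A straightforward induction on $k$, with base case $\ev_{\bullet}(\gamma_{\an}(t)) = t\an(\bullet)$, then shows that $\ev_{\tau_k}(\gamma_{\an}(t))$ is polynomial in $t$ for each $k$.

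Consequently, for every $u, v \in \RT$ the function
\begin{displaymath}
F_{u,v}(t) \coloneq P_{u,v}(\gamma_{\an}(t)) = \gamma_{\an}(t)(u \circ v) + \gamma_{\an}(t)(v \circ u) - \gamma_{\an}(t)(u)\gamma_{\an}(t)(v)
\end{displaymath}
is a polynomial in $t$. Because $\gamma_{\an}$ is a one-parameter subgroup, $\gamma_{\an}(n) = b^n$ for every $n \in \mathbb{Z}$, and since $\SymplTM$ is a subgroup by Lemma \ref{lem: ssgroup:clsd} each $b^n$ belongs to $\SymplTM$. Therefore $F_{u,v}$ vanishes on all of $\mathbb{Z}$, and a polynomial with infinitely many zeros must be identically zero. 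Hence $P_{u,v}(\gamma_{\an}(t)) = 0$ for all $t \in \R$ and all $u, v \in \RT$, so $\gamma_{\an}(\R) \subseteq \SymplTM$, as required.

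The main technical point I expect to require care is the polynomiality claim itself: it relies on the lower-triangular form of \eqref{eq: ODE:exponential} combined with the crucial fact that constant input curves produce time-independent coefficients $A_{kl}(\an)$. Once this is in hand, the remainder of the argument is the elementary observation that a polynomial vanishing at infinitely many points is the zero polynomial, applied uniformly in $u, v \in \RT$.
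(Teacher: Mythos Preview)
Your proof is correct and takes a genuinely different route from the paper's.

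The paper argues by induction on $|u|+|v|$: it computes $\tfrac{\partial}{\partial t}P_{u,v}(\gamma(t))$ using the ODE \eqref{eq: ODE:regular}, splits the sum over $\SP{u\circ v}_1$ into three pieces according to where the cut lands relative to the edge joining $u$ to $v$, and shows that after cancellation
\[
\tfrac{\partial}{\partial t}P_{u,v}(\gamma(t)) = Q_{u,v}(\bn) + \sum_{s_1\in\SP{u}_1} P_{(s_1)_u,v}(\gamma(t))\,\bn(u\setminus s_1) + \sum_{s_2\in\SP{v}_1} P_{u,(s_2)_v}(\gamma(t))\,\bn(v\setminus s_2).
\]
The induction hypothesis kills the two sums, so the derivative is the constant $Q_{u,v}(\bn)$; since $P_{u,v}(\gamma(0))=P_{u,v}(\gamma(1))=0$, the fundamental theorem of calculus forces $Q_{u,v}(\bn)=0$.

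Your argument replaces this explicit differential identity by the observation that the triangular constant-coefficient system \eqref{eq: ODE:exponential} forces each $\ev_{\tau}(\gamma_{\an}(t))$ to be a polynomial in $t$, so $P_{u,v}(\gamma_{\an}(t))$ is polynomial as well; then the one-parameter subgroup property together with Lemma~\ref{lem: ssgroup:clsd} gives infinitely many zeros. This is shorter and avoids the combinatorial decomposition of $\SP{u\circ v}_1$. On the other hand, the paper's approach uses only that $\gamma(0)=e$ and $\gamma(1)=b$ lie in $\SymplTM$, not the full subgroup property, and it produces an explicit recursive identity relating the $P_{u,v}$ along $\gamma$ that may be of independent use.
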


\begin{proof}
 The characterisation of $\Lf^d (\SymplTM)$ in \ref{setup: diff:tang} shows that $\exp_{\BGC} (\Lf^d (\SymplTM)) \subseteq \SymplTM$.
 
 For $\SymplTM\subseteq \exp_{\BGC} (\Lf^d (\SymplTM))$, let $a= \exp_{\BGC}(\bn) \in \SymplTM$ and recall that $a$ uniquely determines $\bn \in \Lf(\BGC)$ since $\BGC$ is exponential.
 Also define $\gamma\colon [0,1] \to \BGC$ as $\gamma(t) = \exp_{\BGC}(t\bn)$.

 We must show that $\bn$ satisfies \eqref{eq: symplecticalg} for all pairs of trees $u,v$.
 As a side product of the proof, we get that $P_{u,v}(\gamma(t))=0$ for all $t$.
 Proceed by induction on $|u|+|v|$, and consider \eqref{eq: Pdiff} which we evaluate for all trees using that $\gamma$ solves the differential equation \eqref{eq: ODE:regular} for the constant path $t \mapsto \bn$, i.e.\ since $\bn (\emptyset)=0$ we have
 \begin{equation} \label{eq: gammadiff}
  \left(\frac{\partial}{\partial t}\gamma(t)\right)(\tau) =  \bn(\tau_k) + \sum_{s\in \SP{\tau_k}_1} \gamma (t)(s_{\tau_k}) \bn(\tau_k \setminus s).
 \end{equation}

First, let $|u|+|v|=2$, i.e.\ we insert the single node tree $u=v=\onenode$ in \eqref{eq: symplectic}.
Now $P_{\onenode, \onenode}(a) = 2a(\twonode)-a(\onenode)^ 2$ and \eqref{eq: symplecticalg} yields $Q_{\onenode, \onenode}(\bn) = 2\bn(\twonode)$. 
Moreover, for the single node trees \eqref{eq: Pdiff} with \eqref{eq: gammadiff} yields
 \begin{displaymath}
  \frac{\partial}{\partial t} P_{\onenode, \onenode}(\gamma(t)) = 2\bn(\twonode) +2\gamma(t)(\onenode)\bn(\onenode) -2\gamma(t)(\onenode)\bn(\onenode)= 2\bn(\twonode).
 \end{displaymath}
 We conclude that $\frac{\partial}{\partial t} P_{\onenode, \onenode}(\gamma(t))$ is constant in $t$.
Now $\gamma(0)=e$ implies $P_{\onenode, \onenode}(\gamma(0))=0$  and $P_{\onenode, \onenode}(\gamma(1))=0$ holds since $\gamma(1) \in \SymplTM$.
Therefore, the fundamental theorem of calculus \cite[Theorem 1.5]{hg2002a} yields $Q_{\onenode, \onenode} (\bn) = 2\bn(\twonode)= \int_0^1 \frac{\partial}{\partial t} P_{\onenode, \onenode}(\gamma(t)) \dd t = 0$.
In addition, $P_{\onenode, \onenode}(\gamma(t))=0$ for all $t$.\bigskip

Now, let $u$, $v$ be arbitrary, and assume that $Q_{u',v'}(\bn)=0$ and $P_{u',v'}(\gamma(t))=0$ for all pairs of trees $u',v'$ where $|u'|+|v'|<|u|+|v|$.
For the first term in \eqref{eq: Pdiff}, we have
\begin{equation}\label{eq: first}
\left(\frac{\partial}{\partial t}\gamma(t)\right)(u \circ v) =  \bn(u \circ v) + \sum_{s\in \SP{u \circ v}_1} \gamma (t)(s_{u \circ v}) \bn(u \circ v \setminus s).
\end{equation}
Observe that the splittings $\SP{u \circ v}_1$ can be divided into three parts.
Namely, we have three disjoint cases for $s\in \SP{u \circ v}_1$, either $s_{u\circ v}=u$ or $s_{u\circ v}= (s_1)_{u} \circ v$ where $s_1\in \SP{u}_1$ or $s_{u\circ v}=u \circ (s_2)_v$ where $s_2\in \SP{v}_1$.
Therefore we can rewrite \eqref{eq: first} as
\begin{multline*}
\left(\frac{\partial}{\partial t}\gamma(t)\right)(u \circ v) =  \bn(u \circ v) + \gamma(t)(u) \bn(v) + \\
\sum_{s_1 \in \SP{u}_1} \gamma (t)((s_1)_u\circ v) \bn (u\setminus s_1)+  \sum_{s_2 \in \SP{v}_1} \gamma (t)(u \circ (s_2)_v) \bn(v\setminus s_2).
\end{multline*}
For the second term in \eqref{eq: Pdiff}, we get the same expression with $u$ and $v$ interchanged, while for the last two terms, we can use \eqref{eq: gammadiff} directly.

Now, we see that $\frac{\partial}{\partial t} P_{u,v}(\gamma(t))$ can be written as
\begin{multline*}
 \frac{\partial}{\partial t} P_{u, v}(\gamma(t)) = Q_{u,v}(\bn) +\\ 
  \sum_{s_1 \in \SP{u}_1} P_{(s_1)_u, v}(\gamma(t))\bn(u\setminus s_1) + \sum_{s_2 \in \SP{v}_1} P_{u, (s_2)_v}(\gamma(t)) \bn(v\setminus s_2).
\end{multline*}
By the induction hypothesis (since $|s_1|<|u|$ and $|s_2|<v$) the two sums disappear, and we are left with  $\frac{\partial}{\partial t} P_{u, v}(\gamma(t)) = Q_{u,v}(\bn)$.
Arguing as in the case $|u|+|v|=2$, we derive $Q_{u,v}(\bn)=0$ and therefore also $P_{u, v}(\gamma(t))=0$ for all $t$. Thus $\bn \in \Lf^d (\SymplTM)$ 
\end{proof}

\begin{thm}\label{thm: subL:sympl}
 The subgroup $\SymplTM$ is a closed Lie subgroup of $\BGC$. Its Lie algebra $\Lf (\SymplTM)$ coincides with $\Lf^d (\SymplTM)$.
 Moreover, this structure turns $\SymplTM$ into an exponential BCH Lie group.
\end{thm}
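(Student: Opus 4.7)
The plan is to exploit the fact that $\BGC$ is an exponential BCH Lie group (Theorem \ref{thm: exp:ana}), so that $\exp_{\BGC}\colon \Lf(\BGC)\to\BGC$ is a global real analytic diffeomorphism. Proposition \ref{prop: ssgp:alg} together with surjectivity of $\exp_{\BGC}$ gives the key identity
\[
 \exp_{\BGC}(\Lf^d(\SymplTM)) = \BGC\cap\SymplTM = \SymplTM,
\]
and injectivity of $\exp_{\BGC}$ shows that $\exp_{\BGC}$ restricts to an analytic bijection $\Lf^d(\SymplTM) \to \SymplTM$.

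Next I would observe that $\Lf^d(\SymplTM)$ is a closed vector subspace of the Fr\'{e}chet space $\Lf(\BGC) = \C^{\RT}$: from \eqref{eq: symplecticalg} it is the intersection of the kernels of the continuous linear functionals $Q_{u,v} = \ev_{u\circ v}+\ev_{v\circ u}$, and in \ref{setup: diff:tang} it was already identified as a closed Lie subalgebra. Using $\exp_{\BGC}^{-1}$ as a global chart on $\BGC$ therefore straightens $\SymplTM$ out to this closed linear subspace. This furnishes $\SymplTM$ with the structure of a closed real analytic submanifold of $\BGC$, modelled on the Fr\'{e}chet space $\Lf^d(\SymplTM)$, with a single global chart.

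Once the submanifold structure is in place, I would verify the Lie subgroup property: the group operations of $\BGC$ are real analytic, their restrictions to the analytic submanifold $\SymplTM \subseteq \BGC$ (which is a subgroup by Lemma \ref{lem: ssgroup:clsd}) are therefore analytic, and take values in $\SymplTM$; by the submanifold property the corestriction remains analytic. Differentiating the inclusion at $e$ identifies $\Lf(\SymplTM)$ with the tangent space $T_e\SymplTM = \Lf^d(\SymplTM)$, and since the chart $\exp_{\BGC}^{-1}|_{\SymplTM}$ intertwines the abstract exponential map of $\SymplTM$ with the identity of $\Lf^d(\SymplTM)$, the exponential of $\SymplTM$ is exactly $\exp_{\BGC}|_{\Lf^d(\SymplTM)}$, which is an analytic diffeomorphism onto $\SymplTM$. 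The BCH series of $\BGC$ restricts to $\Lf^d(\SymplTM)\times \Lf^d(\SymplTM)$ (closedness under the bracket $\LB$ forces closedness under the Taylor polynomials of the local multiplication $\ast$), so $\SymplTM$ becomes a BCH Lie group.

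The main obstacle is not really analytic but conceptual: one has to make sure that transporting the manifold structure of the closed subspace $\Lf^d(\SymplTM)$ through $\exp_{\BGC}$ actually produces a submanifold of $\BGC$ in the strong sense required for a Lie subgroup, rather than merely an injectively immersed one. This is what makes the global exponentiality of $\BGC$ (and not just local exponentiality) indispensable: it delivers a \emph{single global} chart on $\BGC$ in which $\SymplTM$ is a closed linear subspace, so both the submanifold property and the analyticity of the restricted group operations become automatic, and no separate argument about Lie subalgebra integrability is needed.
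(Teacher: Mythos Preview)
Your argument is correct and leads to the same conclusion, but the route differs from the paper's. The paper does not build the submanifold chart by hand; it simply observes that Proposition~\ref{prop: ssgp:alg} verifies the hypothesis of Neeb's general criterion \cite[Theorem IV.3.3]{neeb2006} for a closed subgroup of a locally exponential Lie group to be a (locally exponential) Lie subgroup, and then upgrades ``locally exponential'' to ``exponential'' using that $\exp_{\BGC}$ is a global diffeomorphism. The BCH property is read off from \cite[Definition IV.1.9]{neeb2006}.

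Your approach replaces the appeal to \cite[Theorem IV.3.3]{neeb2006} by the explicit observation that the global chart $\exp_{\BGC}^{-1}$ straightens $\SymplTM$ to the closed linear subspace $\Lf^d(\SymplTM)$, which gives the submanifold structure directly. This is more self-contained and makes transparent why the global (rather than merely local) exponentiality of $\BGC$ is so convenient here. The trade-off is that you must be a bit careful about what ``submanifold'' means in this infinite-dimensional setting (in $\C^{\RT}\cong\prod_\tau\C$ closed subspaces are in fact complemented, so your chart really is a submanifold chart in the strong sense); the paper sidesteps this by delegating those checks to Neeb's theorem. Your final remark about the BCH series restricting is not needed: once $\exp_{\SymplTM}=\exp_{\BGC}|_{\Lf^d(\SymplTM)}$ is an analytic diffeomorphism, the BCH property holds by definition.
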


\begin{proof}
 Proposition \ref{prop: ssgp:alg} shows that \cite[Theorem IV.3.3.]{neeb2006} is applicable. 
 Hence the subspace topology turns $\SymplTM$ into a locally exponential Lie subgroup of $\BGC$.
 However, since $\BGC$ is exponential Proposition \ref{prop: ssgp:alg} indeed shows that $\SymplTM$ is an exponential Lie group.
 Moreover, the exponential map and its inverse are analytic mappings, whence by \cite[Definition IV.1.9]{neeb2006} the group $\SymplTM$ becomes a BCH Lie group.
\end{proof}

\begin{cor}
 The subgroup of real symplectic tree maps $\SymplTMR \coloneq \SymplTM \cap \BGp$ is a closed Lie subgroup of $\BGp$. 
 Its Lie algebra $\Lf (\SymplTMR)$ coincides with $\Lf^d (\SymplTM) \cap \Lf (\BGp)$.
 Moreover, this structure turns $\SymplTMR$ into an exponential BCH Lie group.
\end{cor}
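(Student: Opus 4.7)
The strategy is to derive the corollary from Theorem \ref{thm: subL:sympl} by descending to the real form via the exponential map. First I would handle closedness: since $\SymplTM$ is closed in $\BGC$ by Lemma \ref{lem: ssgroup:clsd} and $\BGp$ is a closed real subgroup of $\BGC$, the intersection $\SymplTMR = \SymplTM \cap \BGp$ is closed in $\BGp$. Alternatively one notes that $\SymplTMR$ is cut out of $\BGp$ by the continuous real-valued maps $P_{u,v}|_{\BGp}$.

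The heart of the proof is to establish the real analogue of Proposition \ref{prop: ssgp:alg}, namely the identity
\begin{equation*}
 \SymplTMR = \exp_{\BGp}\bigl(\Lf^d(\SymplTM) \cap \Lf(\BGp)\bigr).
\end{equation*}
For the inclusion $\supseteq$, if $\bn \in \Lf^d(\SymplTM) \cap \Lf(\BGp)$, then Proposition \ref{prop: ssgp:alg} gives $\exp_{\BGC}(\bn) \in \SymplTM$, and since $\bn$ is a real tree map, $\exp_{\BGp}(\bn) = \exp_{\BGC}(\bn)$ lies in $\BGp$. For the reverse inclusion, if $a \in \SymplTMR$ then by Theorem \ref{thm: exp:ana} there is a unique $\bn \in \Lf(\BGC)$ with $\exp_{\BGC}(\bn) = a$. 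Proposition \ref{prop: ssgp:alg} forces $\bn \in \Lf^d(\SymplTM)$, while Proposition \ref{prop: logarithm} guarantees $\bn = \LOG_{\BGC}(a) \in \Lf(\BGp)$ because $a \in \BGp$.

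From this identity the remaining claims follow formally. The set $\Lf^d(\SymplTM) \cap \Lf(\BGp)$ is a closed real Lie subalgebra of $\Lf(\BGp)$, being the intersection of two closed Lie subalgebras inside $\Lf(\BGC)$ (with $\Lf(\BGp) \subseteq \Lf(\BGC)$ closed by construction). By Theorem \ref{thm: exp:ana}, $\exp_{\BGp}$ is a real analytic diffeomorphism $\Lf(\BGp) \to \BGp$, so its restriction to this closed subalgebra is a real analytic diffeomorphism onto $\SymplTMR$. Transport of structure turns $\SymplTMR$ into a closed Lie subgroup of $\BGp$ with Lie algebra $\Lf^d(\SymplTM) \cap \Lf(\BGp)$ and makes it exponential. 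The BCH property on $\SymplTMR$ is inherited from the BCH property of $\SymplTM$ (resp.\ $\BGp$), since the local multiplication and the BCH series commute with restriction to the real closed subalgebra.

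The principal technical point to verify is that the manifold structure obtained through $\exp_{\BGp}$ really coincides with the subspace structure inherited from $\BGp$, so that $\SymplTMR$ is genuinely a Lie \emph{sub}group and not merely an abstract Lie group mapping continuously into $\BGp$. This is where Proposition \ref{prop: logarithm} and Proposition \ref{prop: log:diff} do the essential work: the real analytic inverse $\exp_{\BGp}^{-1} \colon \BGp \to \Lf(\BGp)$ restricts to a continuous (in fact real analytic) inverse on $\SymplTMR$, so the exponential is a homeomorphism onto $\SymplTMR$ equipped with the subspace topology. This yields the applicability of \cite[Theorem IV.3.3.]{neeb2006} in the real analytic setting, exactly as in the proof of Theorem \ref{thm: subL:sympl}.
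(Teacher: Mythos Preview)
Your argument is correct and is exactly the natural way to deduce the corollary from Theorem~\ref{thm: subL:sympl}: one checks the real analogue of Proposition~\ref{prop: ssgp:alg} using Proposition~\ref{prop: logarithm} to ensure the logarithm of a real symplectic element stays in $\Lf(\BGp)$, and then invokes \cite[Theorem IV.3.3]{neeb2006} for $\BGp$ just as in the complex case. The paper itself provides no proof for this corollary --- it is stated without argument as an immediate consequence of Theorem~\ref{thm: subL:sympl} --- so there is nothing to compare against beyond noting that your write-up supplies precisely the details the authors left implicit.
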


 \section*{Acknowledgements}
The research on this paper was partially supported by the projects \emph{Topology in Norway} (Norwegian Research Council project 213458) and \emph{Structure Preserving Integrators, Discrete Integrable Systems and Algebraic Combinatorics} (Norwegian Research Council project 231632). The second author would also like to thank Reiner Hermann for helpful discussions on Hopf algebras.

\begin{appendix}
\phantomsection
\addcontentsline{toc}{section}{References}
\bibliographystyle{new}
\bibliography{butcher}
\end{appendix}
\end{document}